\documentclass[final,reqno,onefignum,onetabnum]{siamltex1213}

\usepackage{subdepth}
\usepackage{overpic}
\usepackage{amsmath,amstext,amsbsy,amssymb,mathdots}
\usepackage{booktabs,bm}
\usepackage{mathrsfs}
\usepackage{tikz,cite}
\usepackage{todonotes}
\usepackage{courier}
\usetikzlibrary{positioning}
\usetikzlibrary{shapes,arrows}
\usepackage[fleqn,tbtags]{mathtools}
\usepackage{amsfonts}
\usepackage{relsize}
\usepackage{xcolor,colortbl}
\usepackage{psfrag}
\usepackage{alltt}
\usepackage{arydshln}
\usepackage{enumitem}

\newcommand{\grads}{\nabla_{\mathcal{S}}}
\newcommand{\diffx}[1]{\dfrac{\tpartial #1}{\partial x}}
\newcommand{\diffy}[1]{\dfrac{\tpartial #1}{\partial y}}
\newcommand{\diffz}[1]{\dfrac{\tpartial #1}{\partial z}}
\newcommand{\even}{\textnormal{{\tiny +}}}
\newcommand{\odd}{\textnormal{{\tiny --}}}
\newcommand{\feven}{f^{\even}}
\newcommand{\fodd}{f^{\odd}}
\newcommand{\fteven}{\tilde{f}^{\even}}
\newcommand{\ftodd}{\tilde{f}^{\odd}}
\newcommand{\eteven}{\tilde{e}^{\even}}
\newcommand{\etodd}{\tilde{e}^{\odd}}
\newcommand{\ceven}{{c}^{\even}}
\newcommand{\codd}{{c}^{\odd}}
\newcommand{\reven}{{r}^{\even}}
\newcommand{\rodd}{{r}^{\odd}}

\newcommand{\meven}{m^{\even}}
\newcommand{\modd}{m^{\odd}}
\newcommand{\keven}{k^{\even}}
\newcommand{\kodd}{k^{\odd}}
\newcommand{\Keven}{K^{\even}}
\newcommand{\Kodd}{K^{\odd}}
\newcommand{\deven}{d^{\even}}
\newcommand{\dodd}{d^{\odd}}
\newcommand{\Minv}{M^{\dagger_{\epsilon}}}
\newcommand{\tpartial}{\partial^{\rm t}}

\setlength{\marginparwidth}{3cm}

%\title{Computing with functions in spherical and polar geometries using low rank approximations I. The sphere}
\title{Computing with functions in spherical and polar geometries I. The sphere}
\author{Alex Townsend\thanks{Department of Mathematics, Massachusetts Institute of Technology, 77 Massachusetts Avenue
Cambridge, MA 02139-4307. (ajt@mit.edu). Supported by NSF grant No.~DMS 1522577.} \and Heather Wilber\thanks{Department of Mathematics, Boise State University, Boise, ID 83725-1555. (heatherwilber@boisestate.edu).  Supported by a grant from the NASA Idaho Space Grant Consortium.} \and Grady B. Wright\thanks{Department of Mathematics, Boise State University, Boise, ID 83725-1555. (gradywright@boisestate.edu).  Supported by NSF grant No.~DMS 1160379.}}

\begin{document}
\maketitle

\begin{abstract}
A collection of algorithms is described for numerically computing 
with smooth functions defined on the unit sphere. Functions are approximated to essentially 
machine precision by using a structure-preserving iterative variant of Gaussian 
elimination together with the double Fourier sphere method. We show that this procedure 
allows for stable differentiation, reduces the oversampling of 
functions near the poles, and converges for certain analytic functions. 
Operations such as function evaluation, differentiation, and integration are 
particularly efficient and can be computed by essentially one-dimensional algorithms. 
A highlight is an optimal complexity direct solver for Poisson's equation on the 
sphere using a spectral method. Without parallelization, we solve Poisson's equation with $100$ million 
degrees of freedom in one minute on a standard laptop.  Numerical results are presented throughout.   
In a companion paper (part II) we extend the ideas presented here to computing with functions on the disk.
\end{abstract}

\begin{keywords}
low rank approximation, Gaussian elimination, functions, approximation theory
\end{keywords}

\begin{AMS}
65D05 
\end{AMS}

\section{Introduction} 
Spherical geometries are universal in computational science and engineering, arising 
in weather forecasting and climate modeling~\cite{Merilees_73_01,Orszag_74_01,Fornberg_95_01,spotz1998fast,LaytonSpotz2003,Cheong2000261,coiffier2011fundamentals},
geophysics~\cite{FlyerWright,FlyerWrightYuen}, and astrophysics~\cite{Bruegmann2013216,BartnikNorton,Tichy2006}. 
At various levels these applications all require the approximation of functions defined on the surface of 
the unit sphere. For such computational tasks, a standard approach is to use longitude-latitude coordinates $(\lambda,\theta)\in[-\pi,\pi]\times[0,\pi]$,
where $\lambda$ and $\theta$ denote the azimuthal and polar angles, respectively. 
Thus, computations with functions on the sphere can be conveniently related to analogous tasks involving functions defined on a
rectangular domain. This is a useful observation that, unfortunately, also has 
many severe disadvantages due to artificial pole singularities introduced by the coordinate transform.  

In this paper, we synthesize a classic technique known as the double Fourier sphere (DFS) 
method~\cite{Fornberg_97_01,Merilees_73_01,Orszag_74_01,Boyd_1978,Yee1980} together with new
algorithmic techniques in low rank function approximation~\cite{bebendorf2000approximation,Chebfun2}. This alleviates 
many of the drawbacks inherent with standard coordinate transforms. Our approximants have several 
attractive properties: (1) no artificial pole singularities, (2) a representation that allows for fast algorithms,
(3) a structure so that differentiation is stable, and (4) an underlying interpolation grid that 
rarely oversamples functions near the poles.  

To demonstrate the generality of our approach we describe a collection of algorithms for performing
common computational tasks and develop a software system for numerically computing with functions on the sphere,
which is now part of Chebfun~\cite{Chebfun}. 
A broad variety of algorithms are then exploited to provide a convenient 
computational environment for vector calculus, geodesic calculations, and the solution of 
partial differential equations. In the second part to this paper we show that these techniques naturally extend to computing 
with functions defined on the unit disk~\cite{diskfun}.  

With these tools investigators can now complete 
many computational tasks on the sphere without worrying about the underlying discretizations. 
Whenever possible, we aim to deliver essentially 
machine precision results by data-driven compression and reexpansion of our underlying function
approximations. Accompanying this paper is publicly available MATLAB code distributed with 
Chebfun~\cite{Chebfun}, which has a new class called spherefun. One may wish to read this paper 
with the latest version\footnote{The spherefun code is publicly available in the Chebfun development branch from \texttt{https://github.com/chebfun/chebfun} and will be included in the next Chebfun release.} of Chebfun downloaded and be ready for interactive exploration. 

Our two main techniques are the DFS method 
and a structure-preserving iterative variant of Gaussian elimination (GE) for low rank 
function approximation\cite{townsend2013gaussian}. The DFS method transforms a function on the sphere
to a function on a rectangular domain that is periodic in both variables, 
with some additional special structure (see Section~\ref{sec:doubleFouriersphere}).
Our GE procedure constructs a structure-preserving 
and data-driven approximation in a low rank representation. The
low rank representation means that many operations, including 
function evaluation, differentiation, and integration, are particularly efficient (see Section~\ref{sec:spherefun}). In 
addition, our representations allow for fast algorithms based on the fast Fourier 
transform (FFT) including a fast Poisson solver based on the Fourier spectral method 
(see Section~\ref{sec:PoissonSphere}).

% Existing methods paragraph, here: 
%There are numerous ideas for computing with functions on the sphere that have many strengths 
%and inherent problems. Here is a selection. 
There are several existing approaches for computing with functions on the sphere.  The following is a selection:
\begin{itemize}[leftmargin=*]
\item {\bf Spherical harmonic expansions:}  Spherical harmonics are the spherical analogue of trigonometric expansions for periodic functions and provide essentially uniform resolution of a function over the sphere~\cite[Chap.~2]{atkinson2012spherical}.  They have major applications in weather 
forecasting~\cite[Ch. 18]{boyd2001chebyshev}, least-squares filtering~\cite{jekeli1996spherical}, and the numerical 
solution of separable elliptic equations.  
\item {\bf Longitude-latitude grids:} See Section~\ref{sec:longitude}.
\item {\bf Quasi-isotropic grid-based methods} Quasi-isotropic grid-based methods, such as those that use the ``cubed-sphere''
(``quad-sphere'')~\cite{Sadourny_72_01,Taylor_97_01}, geodesic (icosahedral) grids~\cite{Baumgardner_85_02}, or equal area ``pixelations''~\cite{Gorski_05_01}, partition the sphere into (spherical) quads, triangles, or other polyhedra, where approximation techniques such as splines or spectral elements can be used.
\item {\bf Mesh-free methods:} Mesh-free methods for the sphere such as radial basis functions (RBFs)~\cite{FASC98} allow for function reconstruction from ``scattered'' nodes on the sphere, which can be arranged in quasi-optimal configurations. These methods have been used in numerical weather prediction and solid earth geophysics~\cite{FlyerWright,FlyerWrightYuen}. 
\item {\bf The double Fourier sphere method:} See Section~\ref{sec:doubleFouriersphere}.
\end{itemize}
%In this paper we require an approximation scheme for functions on the sphere that is 
%highly adaptive and has high accuracy. Therefore, a fast transform is essential so 
%that the computations remain computationally feasible.  
To achieve the goals of this paper and accompanying software, we require an approximation scheme for functions on the sphere that is 
highly adaptive and can achieve $16$-digits of precision.  It is also desirable to have an associated fast transform so that the computations remain 
computationally efficient.  The DFS method combined with low rank approximation, which we develop in this paper, best fits our goals. 

The paper is structured as follows.  We first briefly introduce the software that accompanies this paper.  Following this in Section \ref{sec:existing}, we review the DFS method.  Next, in Section~\ref{sec:lowRankApproximation} we discuss low rank function approximation 
and derive and analyze a structure-preserving GE 
procedure.  We then show how one can perform a collection computational tasks with functions on the sphere using the combined DFS and low rank techniques in Section~\ref{sec:spherefun}.  Finally in Section~\ref{sec:PoissonSphere}, we describe a fast and spectrally accurate method for Poisson equation on the sphere.

\subsection{Software}
There are existing libraries that provide various tools for analyzing functions 
on the sphere~\cite{Simons,LeistedtEtAl_2013,Wieczorek,AdamsSwarztrauber_97}, 
but none that easily allow for exploring functions in an integrated environment.  We have implemented such a package in 
MATLAB and we have made it publicly available as part of Chebfun~\cite{Chebfun}.  The interface to the software is through the creation of spherefun objects.  For example,
\begin{align}
f(\lambda,\theta) = \cos(1 + 2\pi (\cos\lambda\sin\theta + \sin\lambda\sin\theta) + 5 \sin(\pi \cos\theta))
\label{eq:sphereTestFunc}
\end{align}
can be constructed by the MATLAB code:
\begin{alltt}
f = spherefun( @(la,th) cos(1 + 2*pi*(cos(la).*sin(th) +...
                      sin(la).*sin(th)) + 5*sin(pi*cos(th))) )
\end{alltt}
The software also allows for functions to be defined by Cartesian coordinates.  
For example, the following code is equivalent to the above:
\begin{alltt}
f = spherefun( @(x,y,z) cos(1 + 2*pi*(x + y) + 5*sin(pi*z)) )
\end{alltt}
and the output from either of these statements is
\begin{alltt}
f =
     spherefun object: 
         domain        rank    vertical scale
       unit sphere      23            1
\end{alltt}
This indicates that the numerical rank of \eqref{eq:sphereTestFunc} is 23, which is determined using an iterative variant of GE (see Section~\ref{sec:lowRankApproximation}), 
while the vertical scale approximates the absolute maximum of this function.  Once a function has been constructed in spherefun, it can be manipulated 
and analyzed using a hundred or so operations, several of which are discussed 
in Section \ref{sec:spherefun}.  For example, one can perform surface integration (\texttt{sum2}), differentiation (\texttt{diff}), and vector calculus operations (\texttt{div}, \texttt{grad}, \texttt{curl}).

%\section{Existing techniques for computations on the sphere}\label{sec:existing} 
%There are numerous ideas for computing with functions on the sphere that have many strengths 
%and inherent problems. We briefly review a selection of existing techniques.
\section{Longitude-latitude coordinate transforms and the double Fourier sphere method}\label{sec:existing} 
Here, we review longitude-latitude coordinate transforms and the DFS method, which form part of the foundation for our new method. 

\subsection{Longitude-latitude coordinate transforms}\label{sec:coordinateTransforms}\label{sec:longitude}
Longitude-latitude coordinate transforms relate computations with functions on the sphere to 
tasks involving functions on rectangular domains.  The co-latitude coordinate transform is given by
\begin{equation}
x = \cos\lambda\sin\theta, \quad y = \sin\lambda\sin\theta, \quad z = \cos\theta,\qquad (\lambda,\theta)\in[-\pi,\pi]\times[0,\pi],
\label{eq:sphericalCoordinates}
\end{equation}
where $\lambda$ is the azimuth angle and $\theta$ is the polar (or zenith) angle. 
With this change of variables, instead of 
performing computations on $f(x,y,z)$ that are restricted to
the sphere, we can compute with the function $f(\lambda,\theta)$.

However, note that any point of the form $(\lambda,0)$ with $\lambda\in[-\pi,\pi]$ maps to
$(0,0,1)$ by~\eqref{eq:sphericalCoordinates} and hence, the coordinate transform 
introduces an artificial singularity at the north pole.  An equivalent singularity occurs at the south pole for the point $(\lambda,\pi)$.
When designing an interpolation grid for approximating functions, any 
reasonable grid on $(\lambda,\theta)\in[-\pi,\pi]\times[0,\pi]$ 
is mapped to a grid on the sphere that is unnecessarily and severely clustered at the poles. 
Therefore, naive grids typically oversample functions near the poles, resulting 
in redundant evaluations during the approximation process. 

Another issue is that these coordinate transforms do 
not preserve the periodicity of functions defined on the sphere 
in the latitude direction. This means the FFT is not
immediately applicable in the $\theta$-variable.  

\subsection{The double Fourier sphere method}\label{sec:doubleFouriersphere}
The DFS method proposed by Merilees~\cite{Merilees_73_01}, and developed 
further by Orszag~\cite{Orszag_74_01}, Boyd~\cite{Boyd_1978}, Yee~\cite{Yee1980}, and Fornberg~\cite{Fornberg_95_01}, is a simple 
technique that transforms a function on the sphere to one on a rectangular domain, while 
preserving the periodicity of that function in both 
the longitude and latitude directions. First, a function $f(x,y,z)$ on the sphere is 
written as $f(\lambda,\theta)$ using~\eqref{eq:sphericalCoordinates}, i.e.,
\[
 f(\lambda,\theta) = f(\cos\lambda\sin\theta,\sin\lambda\sin\theta,\cos\theta), \qquad (\lambda,\theta)\in[-\pi,\pi]\times[0,\pi]. 
\]
This function $f(\lambda,\theta)$ is $2\pi$-periodic in $\lambda$, but not periodic 
in $\theta$. The periodicity in the latitude direction has been lost. To recover it,  the 
function is ``doubled up'' and a related function on $[-\pi,\pi]\times[-\pi,\pi]$ is defined as 
\begin{equation} 
 \tilde{f}(\lambda,\theta) = 
 \begin{cases} 
  g(\lambda+\pi,\theta), & (\lambda,\theta)\in[-\pi,0]\times[0,\pi],\cr 
  h(\lambda,\theta), & (\lambda,\theta)\in[0,\pi]\times[0,\pi],\cr 
  g(\lambda,-\theta), & (\lambda,\theta)\in[0,\pi]\times[-\pi,0],\cr
  h(\lambda+\pi,-\theta), & (\lambda,\theta)\in[-\pi,0]\times[-\pi,0],
 \end{cases}
\label{eq:BMCsphere}
\end{equation} 
where $g(\lambda,\theta)=f(\lambda-\pi,\theta)$ and $h(\lambda,\theta)=f(\lambda,\theta)$ for $(\lambda,\theta)\in[0,\pi]\times[0,\pi]$.  This doubling up of $f$ to $\tilde{f}$ is referred to as a glide reflection~\cite[\S 8.1]{martin2012transformation}.  Figure~\ref{fig:Altas} demonstrates the DFS method 
applied to Earth's landmasses.  The new function $\tilde{f}$ is $2\pi$-periodic in $\lambda$ and $\theta$, and is constant along the lines $\theta=0$ and $\theta=\pm \pi$, corresponding to the poles.   To compute a particular operation on a function $f(x,y,z)$ on the sphere we use the DFS 
method to related it to a task involving $\tilde{f}$. Once a particular numerical quantity has been calculated 
for $\tilde{f}$, we translate it back to have a meaning for the 
original function $f(x,y,z)$. %The definite integral of $f(x,y,z)$ over the sphere exemplifies this working paradigm (see Section~\ref{sec:spherefun}).

In describing and implementing our iterative Gaussian elimination algorithm for producing low rank approximations of $\tilde{f}$, it is convenient to visualize $\tilde{f}$ in block form, similar to a matrix.  To this end, and with a slight abuse of notation, we depict $\tilde{f}$ as
\begin{equation}
 \tilde{f} = \begin{bmatrix} g & h \\[3pt] {\tt flip}(h) & {\tt flip}(g) \end{bmatrix},   
\label{eq:depictBMC}
\end{equation} 
where ${\tt flip}$ refers to the MATLAB command that reverses the 
order of the rows of a matrix.
%\begin{equation}
% \tilde{f} = \left[\begin{array}{c:c} 
%\hdashline
%g & h \\[3pt]
%\hdashline
% {\tt flip}(h) & {\tt flip}(g) \\[3pt]
%\hdashline
%\end{array}\right],   
%\label{eq:depictBMC}
%\end{equation} 
%where ${\tt flip}$ refers to the MATLAB command that reverses the 
%ordering of the rows of a matrix, and the horizontal dashed lines mark the poles where $\tilde{f}$ is constant. 
The format in~\eqref{eq:depictBMC} shows the structure that we wish to preserve in our algorithm.  We see from~\eqref{eq:depictBMC} that $\tilde{f}$ has a structure close to a $2\times 2$ centrosymmetric matrix, 
except that the last block row is flipped (mirrored). For this reason we say that $\tilde{f}$ in~\eqref{eq:BMCsphere} has block-mirror-centrosymmetric (BMC) structure.
\begin{definition}(Block-mirror-centrosymmetric functions)
Let $a,b\in\mathbb{R}$. A function $\tilde{f}:[-a,a]\times[-b,b]\rightarrow \mathbb{C}$ is a block-mirror-centrosymmetric (BMC) 
function, if there are functions $g,h:[0,a]\times[0,b]\rightarrow \mathbb{C}$ 
such that $\tilde{f}$ satisfies~\eqref{eq:depictBMC}.
% \begin{equation} 
%  \tilde{f}(\lambda,\theta) = 
%  \begin{cases} 
%   g(\lambda,\theta), & (\lambda,\theta)\in[-a,0]\times[0,b],\cr 
%   h(\lambda,\theta), & (\lambda,\theta)\in[0,a]\times[0,b],\cr 
%   g(\lambda-a,-\theta), & (\lambda,\theta)\in[0,a]\times[-b,0],\cr
%   h(\lambda+a,-\theta), & (\lambda,\theta)\in[-a,0]\times[-b,0].
%  \end{cases}
% \label{eq:BMCstructure}
% \end{equation} 
\label{def:BMCfunction}
\end{definition}

Via~\eqref{eq:BMCsphere} every smooth function on the sphere is associated with a smooth BMC function defined on $[-\pi,\pi]\times[-\pi,\pi]$ that 
is $2\pi$-periodic in both variables  (also called {\em bi-periodic}). The converse is not true, since it may be possible to have a smooth BMC function that is bi-periodic but is not constant at the poles, i.e., along the lines $\theta=0$ and 
$\theta=\pi$\footnote{The bi-periodic BMC function $f(\lambda,\theta) = \cos(2\theta)\cos(2\lambda)$ is not constant along $\theta=0$ or $\theta=\pi$.}.  We define BMC functions with this property as follows:
\begin{definition}(BMC-I functions)
A function $\tilde{f}:[-a,a]\times[-b,b]\rightarrow \mathbb{C}$ is a Type-I BMC (BMC-I) function if it is a BMC function and it is constant when its second variable is equal to $0$ and $\pm b$, i.e., $f(\cdot,0) = \alpha$, $f(\cdot,b) = \beta$, and $f(\cdot,-b) = \gamma$.
\label{def:BMCfunctionI}
\end{definition}

For the sphere, we are interested in BMC-I functions defined on $[-\pi,\pi]^2$ that are bi-periodic for which $f$ takes the same constant value when $\theta=\pm \pi$. For 
the disk we are interested in so-called BMC-II functions~\cite{diskfun}.

Our approximation scheme and subsequent numerical algorithms for the sphere preserve BMC-I structure and bi-periodicity of a function 
strictly, without exception. By doing this we can compute with functions on $[-\pi,\pi]^2$ 
while keeping an interpretation on the sphere.  

% Longitude-latitude rectangular grids are a natural and useful numeric representation of functions over the sphere: 
% such grids conform readily to established approximation techniques and prove intuitive in many computing environments.  
% Additionally, the periodicity of these grids in the longitudinal direction can be exploited via the FFT.  
% However, this method .  The \textit{Double Fourier Sphere}, first proposed in the 
% 1970s by Merilees~\cite{merilees1973pseudospectral}, Orszag~\cite{orszag1974fourier} and Boer 
% and Steinberg~\cite{boer1975fourier}, is a classic solution to this problem.  As depicted below, the DFS is formed by 
% reflecting the usual longitude-latitude grid across the South pole and translating the doubled image $\pi$ units in a 
% modular fashion, so that the arcs forming great meridians on the sphere are connected.  The result is a square grid 
% that is $2\pi$-periodic in both directions and possesses a variety of symmetry properties. In addition to allowing 
% bidirectional fast transforms, enforcing these symmetry and periodicity constraints ensures smoothness across the 
% poles, even in low-resolution representations. \textcolor{red}{(I'm thinking here about Dr. Wright's images from 
% the slides that show how chebfun's low resolution approximations are off at the poles. Is this a digression or should 
% we include? )}

The DFS method has been used since the 1970's in numerical weather prediction~\cite{Merilees_73_01,Orszag_74_01,Fornberg_95_01,spotz1998fast,LaytonSpotz2003,Cheong2000261}, it has recently found its way to the 
computation of gravitational fields near black holes~\cite{Bruegmann2013216,BartnikNorton,Tichy2006} and to 
novel space-time spectral analysis~\cite{sun2014contrasting}.

\begin{figure} 
 \centering
 \begin{minipage}{.49\textwidth} 
 \centering
%   \begin{overpic}[width=.68\textwidth,trim=50 50 50 50,clip]{Earth1}
%   \end{overpic}
   \begin{overpic}[width=.5\textwidth]{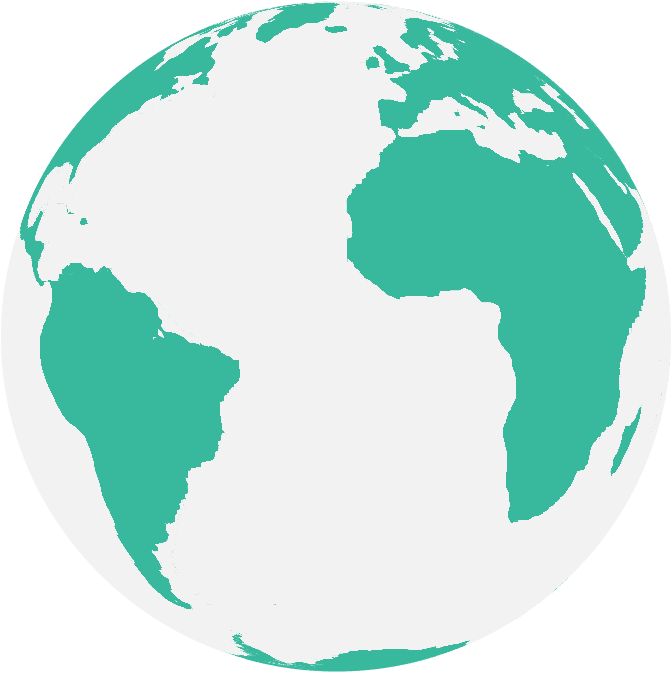}
   \end{overpic}
   
    \begin{overpic}[width=.95\textwidth,trim=30 50 30 60,clip]{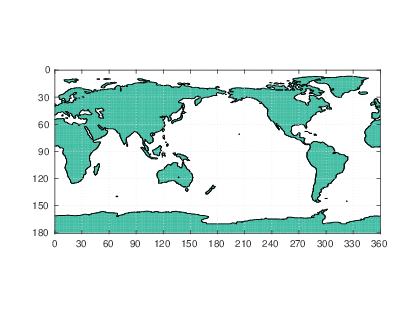}
    \put(0,105) {(a)}
    \put(0,60) {(b)}
    \put(50,0) {\footnotesize $180\lambda/\pi$}
    \put(-3,25) {\footnotesize \rotatebox{90}{$180\theta/\pi$}}
   \end{overpic}
 \end{minipage}
\begin{minipage}{.49\textwidth} 
   \begin{overpic}[width=\textwidth,trim=55 10 68 20,clip]{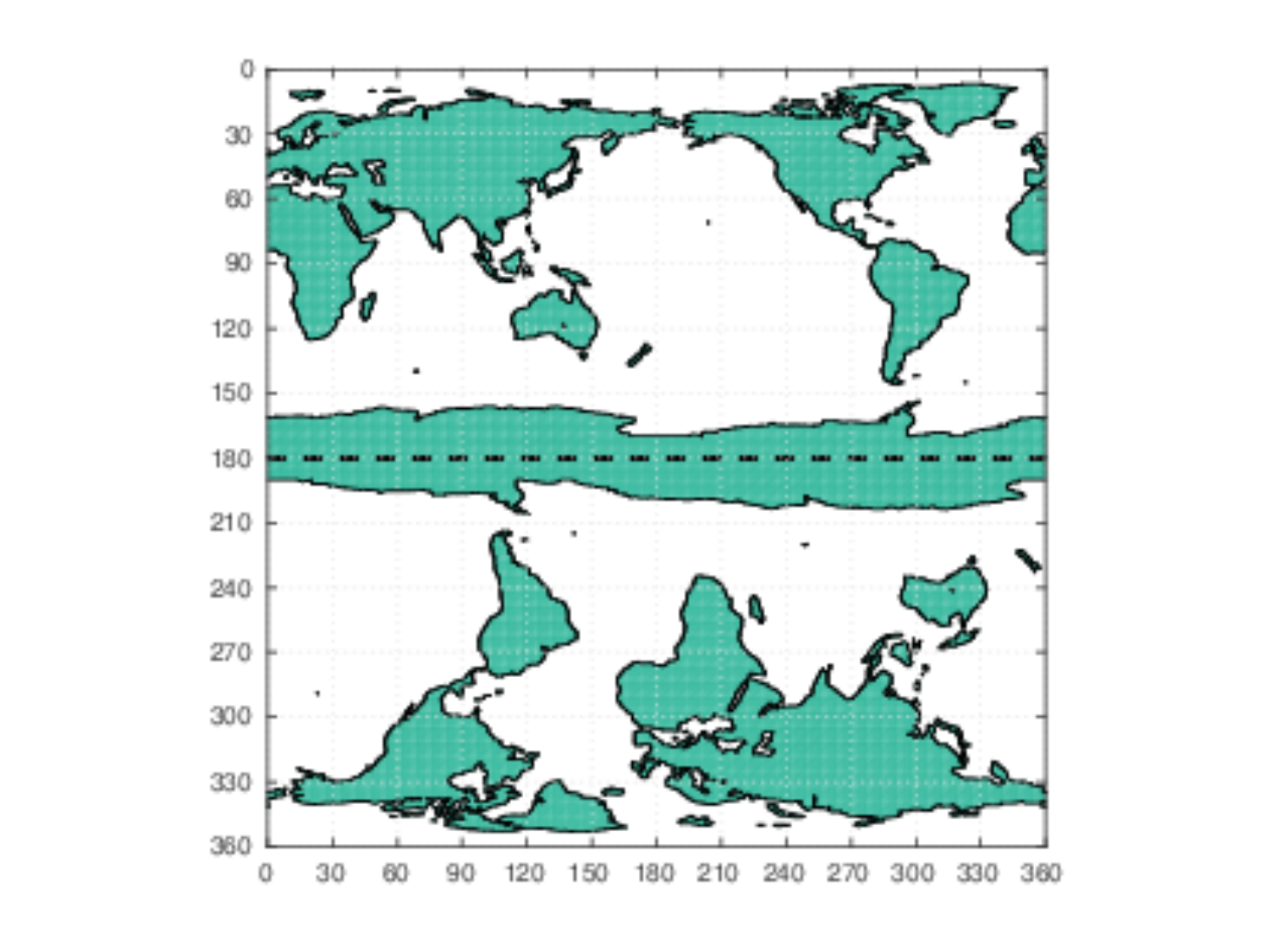}
  \put(0,95) {(c)}
  \put(55,0) {\footnotesize $180\lambda/\pi$}
  \put(0,45) {\footnotesize \rotatebox{90}{$180\theta/\pi$}}
 \end{overpic}
 \end{minipage}
 \caption{The DFS method applied to the globe. (a) The land masses on the surface of earth. (b) The projection of the land masses using latitude-longitude coordinates. (c) Land masses after applying the DFS method. This is a BMC-I ``function'' that is periodic in longitude and latitude.}
 \label{fig:Altas}
\end{figure}

% \subsection{Low rank function approximation} 
% Another, naive approach for representing functions in two variables is a tensor-product construction. 
% \[
% \begin{aligned} 
% %\text{Polar: } &  f(r,\theta) = \sum_{\ell=0}^\infty \sum_{m=0}^\infty c_{\ell,m} T_{\ell}(r) e^{i m\theta},\\
% f(\lambda,\theta) = \sum_{\ell=0}^\infty\sum_{m=-\ell}^\ell c_{\ell,m} T_{\ell}(\frac{(\theta-\pi)}{2\pi}) e^{i m\lambda},\\
% \end{aligned}
% \]
% where $T_{\ell}(x) = \cos(\ell \cos^{-1}x )$ for $-1\leq x\leq 1$ is the degree $\ell$ Chebyshev polynomial (of the first kind). 
% This construction can be intimately connected to fast transforms based on the 
% fast Fourier transform (FFT) and discrete cosine transform (DCT). In this framework all 
% algorithms must be derived from scratch. 
% 
% For many functions that occur in practice one can do better by exploiting a low rank approximation ,  

\section{Low rank approximation for functions on the sphere}\label{sec:lowRankApproximation}
In~\cite{Chebfun2}, low rank techniques 
for numerical computations with bivariate functions was explored. It is now
the technology employed in the two-dimensional side of Chebfun~\cite{Chebfun} with benefits 
that include a compressed representation of functions and efficient algorithms that heavily rely on 
1D technology~\cite{Townsend_15_01}. 
Here, we extend this framework to the approximation of functions on the sphere. 

A function $\tilde{f}(\lambda,\theta)$ is of rank $1$ if it is nonzero and can be written 
as a product of univariate functions, i.e., $\tilde{f}(\lambda,\theta) = c_1(\theta)r_1(\lambda)$. 
A function is of rank at most $K$ if it can be expressed as a sum of $K$ rank $1$ functions.
Here, we describe how to compute rank $K$ approximations of BMC-I functions
that preserve the BMC-I structure. 

\subsection{Structure-preserving Gaussian elimination on functions}\label{sec:StructurePreservingGE}
As an algorithm on $n\times n$ matrices, GE with complete, rook, or 
maximal volume pivoting (but not partial pivoting) is known for its 
rank-revealing properties~\cite{foster2006comparison}. That is, after $K<n$ steps the GE procedure 
can construct a rank~$K$ approximation of a matrix that is close to the best rank $K$ approximation, particularly when 
that matrix comes from sampling a smooth function~\cite{Chebfun2}.  GE for constructing 
low rank approximations is ubiquitous and also goes under the names --- with a variety of pivoting strategies ---
adaptive cross approximation~\cite{bebendorf2000approximation}, two-sided interpolative 
decomposition~\cite{halko2011finding}, and Geddes--Newton approximation~\cite{carvajal2005hybrid}.
 
GE has a natural continuous analogue for functions that immediately 
follows by replacing the matrix $A$ in
the GE step, i.e., $A\leftarrow A - A(:,j)A(i,:)/A(i,j)$, with a function~\cite{Chebfun2}. The 
first step of GE on a BMC function $\tilde{f}$ with 
pivot $(\lambda_*,\theta_*)$ is 
\begin{equation} 
\tilde{f}(\lambda,\theta) \quad \longleftarrow\quad \tilde{f}(\lambda,\theta) - \underbrace{\frac{\tilde{f}(\lambda_*,\theta)\tilde{f}(\lambda,\theta_*)}{\tilde{f}(\lambda_*,\theta_*)}}_{\text{A rank~$1$ approx.~to $\tilde{f}$}}.
\label{eq:GEstep}
\end{equation} 
The GE procedure continues by repeating the same step on the residual. 
That is, the second GE step selects another pivot and 
repeats~\eqref{eq:GEstep}, then $\tilde{f}$ is updated before another GE step
is taken, and so on.  If the pivot locations 
are chosen carefully, then the rank $1$ updates at each step can be accumulated
and after $K$ steps the GE procedure constructs a rank~$K$ 
approximation to the original function $\tilde{f}$, i.e., 
\begin{equation}
 \tilde{f}(\lambda,\theta) \approx \sum_{j=1}^K d_j c_j(\theta)r_j(\lambda),
\label{eq:lowrankRepresentation}
\end{equation} 
where $d_j$ are quantities determined by the pivot values, $c_j$ are the column 
slices, and $r_j$ are the row slices taken during the GE procedure. 

In principle, the GE procedure may continue {\em ad infinitum} as functions can have infinite rank, 
but in practice we terminate the process after a finite number of steps and settle for 
a low rank approximation. Thus, we refer to this as an iterative variant of GE. Two theorems that 
show why smooth functions are typically of low rank can be found in~\cite[Theorems~3.1 \&~3.2]{TownsendThesis}.

Unfortunately, GE on $\tilde{f}$ with any of the standard pivoting strategies 
destroys the BMC structure immediately and the constructed low rank 
approximants are rarely continuous functions on the sphere. We seek a pivoting 
strategy that preserves the BMC structure.  Motivated by the pivoting strategy for 
symmetric indefinite matrices~\cite{bunch1971direct}, 
we employ $2\times 2$ pivots. 
We first consider preserving the BMC structure, before making a small modification to the algorithm for 
BMC-I structure. 

\begin{figure} 
\centering
\includegraphics[width=.36\textwidth,trim=220 410 220 220, clip]{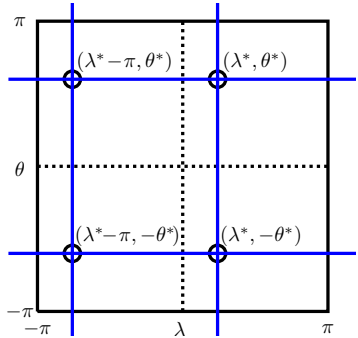}
\caption{A $2\times 2$ GE pivot (black circles) and corresponding rows and columns (blue lines). 
We only select pivots of this form during the GE procedure.
The dotted lines hint at the BMC structure of the function, see~\eqref{eq:depictBMC}.}
\label{fig:pivotLocations}
\end{figure} 

After some deliberation, one concludes that if the 
pivots $(\lambda^*,\theta^*)\in[0,\pi]^2$ and $(\lambda^*-\pi,-\theta^*)$ are 
picked simultaneously, then the GE step does preserve the BMC structure.
Figure~\ref{fig:pivotLocations} shows an example of the $2\times 2$ pivot matrices that
we are considering. To see why such $2\times 2$ pivots 
preserve the BMC structure, let $M$ be the associated $2\times 2$ pivot matrix given by 
\begin{equation}
 M = \begin{bmatrix}\tilde{f}(\lambda^*-\pi,\theta^*) & \tilde{f}(\lambda^*,\theta^*) \cr \tilde{f}(\lambda^*-\pi,-\theta^*) & \tilde{f}(\lambda^*,-\theta^*)\end{bmatrix} = \begin{bmatrix}\tilde{f}(\lambda^*-\pi,\theta^*) & \tilde{f}(\lambda^*,\theta^*) \cr \tilde{f}(\lambda^*,\theta^*) & \tilde{f}(\lambda^*-\pi,\theta^*)\end{bmatrix}, 
\label{eq:pivotMatrix}
\end{equation} 
where the last equality follows from the BMC structure of $\tilde{f}$, see~\eqref{eq:BMCsphere}. The matrix $M$ is a $2\times 2$ 
centrosymmetric matrix and assuming $M$ is an invertible matrix, $M^{-1}$ is also 
centrosymmetric. Therefore, the corresponding GE step on a BMC function $\tilde{f}$ 
takes the following form:
\begin{equation}
 \tilde{f}(\lambda,\theta) \quad \longleftarrow\quad \tilde{f}(\lambda,\theta) - \begin{bmatrix}\tilde{f}(\lambda^*-\pi,\theta) & \tilde{f}(\lambda^*,\theta)\\[3pt] \end{bmatrix} M^{-1} \begin{bmatrix}\tilde{f}(\lambda,\theta^*)\\[3pt] \tilde{f}(\lambda,-\theta^*)\end{bmatrix}.
\label{eq:GEstructurePreservingStep}
\end{equation}
It is a simple matter now to check, using~\eqref{eq:BMCsphere}, that~\eqref{eq:GEstructurePreservingStep} 
preserves the BMC structure of $\tilde{f}$. The key property is that $M^{-1}$ commutes with the exchange matrix because it is centrosymmetric, 
i.e., $JM^{-1} = M^{-1}J$, where $J$ is the matrix formed by swapping 
the rows of the $2\times 2$ identity matrix.

We must go further because the GE step in~\eqref{eq:GEstructurePreservingStep} has two major drawbacks: (1) 
it is not valid unless $M$ is invertible\footnote{Even for mundane functions $M^{-1}$ may not exist. 
For example, if $\tilde{f}\equiv 1$, then 
for all $(\lambda^*,\theta^*)\in[0,\pi]\times [0,\pi]$ the resulting pivot matrix 
$M$ is the matrix of all ones and is singular.}, and (2) it suffers from severe numerical difficulties when $M$ 
is close to singular. To overcome these
failings we replace $M^{-1}$ in~\eqref{eq:GEstructurePreservingStep} by the 
$\epsilon$-pseudoinverse of $M$, denoted by $\Minv$~\cite[Sec.~5.5.2]{Golub_2012_01}. We deliberately 
leave $\epsilon\geq0$ as an algorithmic parameter that we select later. 
\begin{definition}
Let $A$ be a matrix and $\epsilon\geq 0$. If $A = U\Sigma V^*$ is the singular value decomposition of $A$ with
$\Sigma = {\rm diag}(\sigma_1,\ldots,\sigma_n)$ and $\sigma_{k+1} \leq \epsilon < \sigma_k$, then
\[
 A^{\dagger_\epsilon} = V \Sigma^{\dagger_\epsilon}U^*,\qquad \Sigma^{\dagger_\epsilon} = {\rm diag}\left(\sigma_1^{-1}, \ldots, \sigma_k^{-1}, 0, \ldots, 0 \right). 
\]
\label{def:pseudoinverse}
\end{definition}
We discuss the properties of $\Minv$ in the next section, but note here that since $M$ is centrosymmetric so is $\Minv$ (see \eqref{eq:SpectralDecomp}).  Also, the singular values of $M$ are simply
\begin{align}
\sigma_1(M) = \max\{|a+b|,|a-b|\}\;\;\text{and}\;\;\sigma_2(M) = \min\{|a+b|,|a-b|\},
\label{eq:SingularValues}
\end{align}
where $a = \tilde{f}(\lambda^*-\pi,\theta^*)$ and $b=\tilde{f}(\lambda^*,\theta^*)$.
Replacing $M^{-1}$ by $\Minv$ in~\eqref{eq:GEstructurePreservingStep} gives 
\begin{equation}
 \tilde{f}(\lambda,\theta) \quad \longleftarrow\quad \tilde{f}(\lambda,\theta) - \begin{bmatrix}\tilde{f}(\lambda^*-\pi,\theta) & \tilde{f}(\lambda^*,\theta)\\[3pt] \end{bmatrix} \Minv \begin{bmatrix}\tilde{f}(\lambda,\theta^*)\\[3pt] \tilde{f}(\lambda,-\theta^*)\end{bmatrix}.
\label{eq:GEstructurePreservingStepPseudoinverse}
\end{equation} 
If $M$ is well-conditioned, then~\eqref{eq:GEstructurePreservingStepPseudoinverse} 
is the same as~\eqref{eq:GEstructurePreservingStep} because $\Minv = M^{-1}$ 
when $\sigma_2(M)>\epsilon$. However, if $M$ is singular or 
near-singular, then $\Minv$ can be thought of as a surrogate for $M^{-1}$.  
The BMC structure of a function is preserved by~\eqref{eq:GEstructurePreservingStepPseudoinverse} 
since $\Minv$ is still centrosymmetric, for any $\epsilon \geq 0$.
%$\Minv$ is centrosymmetric and commutes  with the exchange matrix.  

Now that~\eqref{eq:GEstructurePreservingStepPseudoinverse} is valid for all nonzero $2\times 2$
pivot matrices, we want to design a strategy to pick ``good'' pivot matrices. 
This allows us to accumulate the GE updates to construct 
low rank approximants to the original function $\tilde{f}$. 
In principle,
we pick $(\lambda^*,\theta^*)\in[0,\pi]\times [0,\pi]$ so that the resulting matrix
$M$ in~\eqref{eq:pivotMatrix} maximizes $\sigma_1(M)$. This is 
the $2\times 2$ pivot analogue of complete pivoting. In practice, 
we settle for a pivot matrix that leads to a large, but not necessarily the maximum 
$\sigma_1(M)$, by searching for $(\lambda^*,\theta^*)$ on a coarse 
discrete grid of $[-\pi,\pi]\times [0,\pi]$. We have found that this pivoting strategy is very effective
for constructing low rank approximants using~\eqref{eq:GEstructurePreservingStepPseudoinverse}.

Unfortunately, the GE procedure does not necessarily preserve the BMC-I structure of a function 
in the sense that the constructed rank $1$ terms in~\eqref{eq:lowrankRepresentation} do not have 
to be constant for $\theta=0$ and $\theta=\pm \pi$ --- 
it is only the complete sum of all the rank $1$ terms that has this property.  If $\tilde{f}$ happens to be zero
along $\theta=0$ and $\theta=\pm \pi$, then each rank $1$ term will have BMC-I structure.  
This suggests that for a BMC-I function one can first ``zero-out'' the function along $\theta=0$ and $\theta=\pm \pi$ and 
then apply the GE procedure to the modified function.  
That is, we first use the rank~$1$ correction: 
\begin{equation}
\tilde{f}(\lambda,\theta) \quad \longleftarrow\quad \tilde{f}(\lambda,\theta) - \tilde{f}(\lambda^{*},\theta),
\label{eq:ZeroOut}
\end{equation}
for some $-\pi \leq \lambda^{*} \leq \pi$.  Afterwards, the GE procedure for preserving 
the BMC structure of a function can be used and the BMC-I structure is automatically preserved. 

Figure~\ref{fig:GaussianElimination} summarizes the GE algorithm that 
preserves the BMC structure of functions and constructs structure-preserving low rank 
approximations. The description given in Figure~\ref{fig:GaussianElimination} is a continuous idealization of the algorithm
that is used in the spherefun constructor as the continuous functions are discretized and  
the GE procedure is terminated after a finite number of steps. Moreover, the 
spherefun constructor only works on the original function $f$ on $[-\pi,\pi]\times [0,\pi]$ 
and mimics the GE procedure on $\tilde{f}$ by using the BMC-I symmetry. This saves a
factor of $2$ in computational cost. 

%when the residual uniformly falls below machine precision relative 
%to the maximum of the original function $\tilde{f}$.
\begin{figure} 
 \centering 
 \fbox{
 \parbox{.97\textwidth}{ 
 \textbf{Algorithm: Structure-preserving GE on BMC functions}
 
 \vspace{.2cm}
 
 \textbf{Input:} A BMC function $\tilde{f}$ and a coupling parameter $0\leq \alpha\leq 1$\\[-7pt] 
 
 \textbf{Output:} A structure-preserving low rank approximation $\tilde{f}_k$ to $\tilde{f}$\\[-3pt]
 
 Set $\tilde{f}_0=0$ and $\tilde{e}_0 = \tilde{f}$.\\[-7pt]
  
 \textbf{for} $k=1,2,3,\ldots,$\\[-10pt]
 
 $\quad$ Find $(\lambda_{k},\theta_{k})$ such that $M =  \begin{bmatrix}a & b \cr b & a\end{bmatrix}$, where $a = \tilde{e}_{k-1}(\lambda_{k-1}-\pi,\theta_{k-1})$ and 
 
 $\quad$ $b = \tilde{e}_{k-1}(\lambda_{k-1},\theta_{k-1})$ has maximal $\sigma_1(M)$ (see~\eqref{eq:SingularValues}).\\[-7pt]
  
 $\quad$ Set $\epsilon = \alpha\sigma_1(M)$. \\[-10pt]
   
 $\quad$ $\tilde{e}_{k} = \tilde{e}_{k-1} - \begin{bmatrix}\tilde{e}_{k-1}(\lambda_k-\pi,\theta) & \tilde{e}_{k-1}(\lambda_k,\theta)\\[3pt] \end{bmatrix} \Minv \begin{bmatrix}\tilde{e}_{k-1}(\lambda,\theta_k)\\[3pt] \tilde{e}_{k-1}(\lambda,-\theta_k)\end{bmatrix}$.\\
 
 $\quad$ $\tilde{f}_{k} = \tilde{f}_{k-1} - \begin{bmatrix}\tilde{e}_{k-1}(\lambda_k-\pi,\theta) & \tilde{e}_{k-1}(\lambda_k,\theta)\\[3pt] \end{bmatrix} \Minv \begin{bmatrix}\tilde{e}_{k-1}(\lambda,\theta_k)\\[3pt] \tilde{e}_{k-1}(\lambda,-\theta_k)\end{bmatrix}$.\\[-7pt]
  
 \textbf{end}
 }}
 \caption{A continuous idealization of our structure-preserving GE procedure on BMC 
 functions. In practice we use a discretization of this procedure and terminate it after a finite 
 number of steps.}
\label{fig:GaussianElimination}
\end{figure}

In practice, to make the spherefun constructor computationally efficient we use the same
algorithmic ideas as in~\cite{Chebfun2}, with the only major difference 
being the pivoting strategy. Phase one of the constructor is designed to estimate the number 
of GE steps and pivot locations required to approximate the BMC-I function $\tilde{f}$. This 
costs $\mathcal{O}(K^3)$ operations, where $K$ is the numerical rank of $\tilde{f}$. Phase two is designed
to resolve the GE column and row slices and costs $\mathcal{O}(K^2(m+n))$ operations, where 
$m$ and $n$ are the number of Fourier modes needed to resolve the columns and rows, respectively. 
The total cost of the spherefun constructor is $\mathcal{O}(K^3+K^2(m+n))$ operations.
For more implementation details on the constructor, we refer the reader to~\cite{Chebfun2}. 

In infinite precision, one may wonder if the GE procedure in Figure~\ref{fig:GaussianElimination} exactly 
recovers a finite rank function.  This is indeed the case. That is, if $\tilde{f}$ is a function of rank $K$ 
in the variables $(\lambda,\theta)$, then the GE procedure terminates after constructing a rank $K$ approximation
and that approximant equals $\tilde{f}$.
\begin{theorem}
 If $\tilde{f}$ is a rank $K$ BMC function on $[-\pi,\pi]^2$, then the GE procedure in Figure~\ref{fig:GaussianElimination} 
 constructs a rank $K$ approximant and $\tilde{f}$ is exactly recovered.  
 \label{thm:BMCrankDrop}
\end{theorem}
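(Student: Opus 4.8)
The plan is to diagonalize the $2\times 2$ centrosymmetric pivot and thereby reduce the structure-preserving procedure of Figure~\ref{fig:GaussianElimination} to two \emph{independent} runs of ordinary scalar ($1\times 1$) Gaussian elimination, to which the classical finite-rank recovery property applies. First I would split every function into its parts that are even and odd in the latitude variable,
\begin{equation}
\fteven(\lambda,\theta)=\tfrac12\bigl(\tilde f(\lambda,\theta)+\tilde f(\lambda,-\theta)\bigr),\qquad
\ftodd(\lambda,\theta)=\tfrac12\bigl(\tilde f(\lambda,\theta)-\tilde f(\lambda,-\theta)\bigr),
\label{eq:evenoddsplit}
\end{equation}
and write $\eteven_k,\etodd_k$ for the even and odd parts of the $k$th residual $\tilde e_k$. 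The BMC identity that follows from~\eqref{eq:BMCsphere}, namely $\tilde f(\lambda,-\theta)=\tilde f(\lambda-\pi,\theta)$, plays two roles: it is what forces the pivot matrix to take the centrosymmetric form~\eqref{eq:pivotMatrix}, and, because it shows the reflected column $\theta\mapsto\tilde f(\lambda,-\theta)$ to be a genuine column (at longitude $\lambda-\pi$), it implies that $\operatorname{rank}\fteven=:\Keven$ and $\operatorname{rank}\ftodd=:\Kodd$ satisfy $K=\Keven+\Kodd$: both column spaces sit inside that of $\tilde f$, their sum is all of it by~\eqref{eq:evenoddsplit}, and they meet trivially since even and odd functions of $\theta$ share only $0$.

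The crux is a direct computation showing that a single step~\eqref{eq:GEstructurePreservingStepPseudoinverse} decouples exactly along this splitting. I would eigendecompose the pivot matrix~\eqref{eq:pivotMatrix} using the eigenvectors $(1,1)^{\!\top}$ and $(1,-1)^{\!\top}$, with eigenvalues $a+b$ and $a-b$, so that its singular values are $|a+b|$ and $|a-b|$ as recorded in~\eqref{eq:SingularValues}. Feeding this spectral form of $\Minv$ into~\eqref{eq:GEstructurePreservingStepPseudoinverse}, and using the BMC symmetry of the residual to rewrite the $\lambda$-shifted slices as $\theta$-reflected ones, the $(1,1)$-channel collapses to $\eteven_{k-1}(\lambda_k,\theta)\,\eteven_{k-1}(\lambda,\theta_k)/\eteven_{k-1}(\lambda_k,\theta_k)$ whenever $|a+b|>\epsilon$ (and to $0$ otherwise), which is precisely an ordinary GE step~\eqref{eq:GEstep} applied to $\eteven_{k-1}$; symmetrically the $(1,-1)$-channel is an ordinary GE step applied to $\etodd_{k-1}$. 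Since the first channel is even in $\theta$ and the second is odd, taking even and odd parts of the updated residual shows that $\eteven_k$ and $\etodd_k$ evolve by independent scalar GE iterations on $\fteven$ and $\ftodd$, driven by the shared pivots $(\lambda_k,\theta_k)$.

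It then remains to run the standard rank-revealing argument for scalar GE on each parity separately and total the contributions: a nonzero-pivot scalar step lowers the rank of its residual by exactly one, so the even channel contributes $\Keven$ rank-one terms to~\eqref{eq:lowrankRepresentation}, the odd channel $\Kodd$, and the assembled approximant has rank $\Keven+\Kodd=K$ and equals $\tilde f$ once both residuals vanish.

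The part I expect to require the most care is the bookkeeping forced by the $\epsilon$-pseudoinverse together with the rule $\epsilon=\alpha\,\sigma_1(M)$. When $\alpha\in[0,1)$ one always has $\sigma_1(M)>\epsilon$ while $\tilde e_{k-1}\neq0$, so the dominant parity is updated at every step, and being a nonzero-pivot scalar step it strictly drops that parity's rank; hence $\operatorname{rank}(\eteven_{k})+\operatorname{rank}(\etodd_{k})$ decreases strictly, forcing termination in at most $K$ steps. The delicate point is the subdominant channel: when $\sigma_2(M)\le\epsilon$ it is truncated to zero and that step leaves one parity untouched, and I must verify that no progress is lost. I would argue this by exhaustion — once a parity's residual is identically zero its singular value vanishes and the other parity necessarily dominates, so neither parity can be starved indefinitely — and by noting that whenever a channel \emph{is} updated its pivot value $(a\pm b)/2$ is nonzero (guaranteed by $|a\pm b|>\epsilon\ge0$), so the scalar steps never divide by zero. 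The degenerate choice $\alpha=1$, for which $\Minv\equiv0$, must be excluded.
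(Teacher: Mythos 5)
Your proposal is correct, but it reaches Theorem~\ref{thm:BMCrankDrop} by a genuinely different route than the paper. The paper's proof is a one-stroke application of the generalized Guttman rank-additivity formula \cite[Cor.~19.2]{Matsaglia_74_01} to the block step \eqref{eq:GEstructurePreservingStepPseudoinverse}: ${\rm rank}(\tilde{f}) = {\rm rank}(\Minv) + {\rm rank}(\text{residual})$, so a step whose pivot $\Minv$ has rank $k$ ($k=1$ or $2$) lowers the residual's rank by exactly $k$, and induction terminates because the only rank-$0$ function is zero. You instead diagonalize the centrosymmetric pivot and decouple the iteration into two scalar GE procedures on the parity parts --- which is essentially the paper's own interpretation in Section~\ref{sec:GEinterpretation}, see \eqref{eq:GEstepSplit}, except that the paper derives that decoupling \emph{after} the theorem and never uses it in the proof --- and you add one ingredient the paper nowhere states: ${\rm rank}\,\tilde{f} = {\rm rank}\,\fteven + {\rm rank}\,\ftodd$, proved via the BMC identity $\tilde{f}(\lambda,-\theta)=\tilde{f}(\lambda-\pi,\theta)$ (taking $\lambda-\pi$ mod $2\pi$) and the direct sum of column spaces, the even/odd intersection being trivial. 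The trade-off: the paper's argument is shorter, but its citation of Guttman additivity silently absorbs the delicate truncated case --- when $\Minv$ has rank $1$ the update is a Wedderburn rank-one reduction with generalized pivot $a+b\neq 0$ (nonzero precisely because the kept singular value exceeds $\epsilon$), which is exactly the mechanism your decoupling makes explicit --- whereas your argument is longer but more informative: it yields the refined count $K=\Keven+\Kodd$ consistent with \eqref{eq:lowrankRepresentationSplit}, verifies channel-by-channel that pivots never vanish, bounds termination at $K$ steps, and correctly flags the boundary case $\alpha=1$, for which Definition~\ref{def:pseudoinverse} gives $\Minv=0$ and the procedure stalls --- a degeneracy the paper's proof (which tacitly assumes $\Minv\neq 0$) never mentions. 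Two harmless details to keep straight: your $\fteven$ carries an extra factor $\tfrac12$ relative to the paper's $\feven=g+h$, which is why your channel pivots are $(a\pm b)/2$ rather than $a\pm b$ (internally consistent either way); and your claim that $\sigma_1(M)>0$ whenever the residual is nonzero implicitly uses that a BMC residual is determined by its values on the upper half-domain searched by the pivoting strategy, an idealization the paper's proof also assumes.
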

\begin{proof}
 Let $(\lambda^*,\theta^*)$ and $(\lambda^*-\pi,-\theta^*)$ be the selected pivot 
 locations in the first GE step and $\Minv$ the corresponding $2\times 2$ pivot matrix. If $\Minv$ 
 is a rank $k$ ($k=1$ or $k=2$) matrix, then GE will form a rank $k$ update in~\eqref{eq:GEstructurePreservingStepPseudoinverse}. 
 Either way, by the generalized Guttman additivity rank formula~\cite[Cor.~19.2]{Matsaglia_74_01}, we have 
 \[
  {\rm rank}( \tilde{f} ) = {\rm rank}( \Minv ) + {\rm rank}\left( \tilde{f} - \begin{bmatrix}\tilde{f}(\lambda^*-\pi,\cdot) & \tilde{f}(\lambda^*,\cdot)\\[3pt] \end{bmatrix} \Minv \begin{bmatrix}\tilde{f}(\cdot,\theta^*)\\[3pt] \tilde{f}(\cdot,-\theta^*)\end{bmatrix} \right), 
 \]
where ${\rm rank}(\cdot)$ denotes the rank of the function or matrix. If 
the GE procedure constructs a rank $k$ ($k=1$ or $k=2)$ approximation in the first 
step, then the rank of the residual is ${\rm rank}(\tilde{f})-k$. Repeating this until 
the residual is of rank $0$, shows that the rank of $\tilde{f}$ and the 
final approximant are the same. The exact recovery result 
follows because the only function of rank $0$ is the zero function so the final 
residual is zero. 
\end{proof}

A band-limited function on the sphere is one that can be expressed as a finite sum of spherical harmonics, similar to a band-limited function on the interval being expressed as a finite Fourier series.  Since each spherical harmonic function is itself a rank 1 function~\cite[Sec.~14.30]{NIST}, Theorem~\ref{thm:BMCrankDrop} also implies that our GE procedure exactly recovers  band-limited functions after a finite number of steps. 

For infinite rank functions, the GE procedure 
in Figure~\ref{fig:GaussianElimination} requires in principle an infinite number of 
steps. We can prove that the successive low rank approximants constructed by GE
converge to $\tilde{f}$ under certain conditions on $\tilde{f}$ (see Section~\ref{sec:GEconvergence}).  Thus, the procedure can be 
terminated after a finite, often small, number of steps, giving an accurate low-rank approximant.  
In the spherefun constructor, we terminate the procedure when the residual falls below machine precision relative 
to an estimate of the absolute maximum of the original function.

If the parameter $\epsilon \geq 0$ for determining $\Minv$ is 
too large, then severely ill-conditioned pivot matrices are allowed and the 
algorithm suffers from a loss of accuracy. If $\epsilon$ is too small, then $\Minv$ is 
almost always of rank $1$ and Theorem~\ref{thm:BMCrankDrop}
shows that the progress of GE is hindered. 
We choose $\epsilon$ to be $\epsilon = \alpha\sigma_1(M)$, where $\alpha = 1/100$. 
In other words, we use $M^{-1}$ in the GE step if $\sigma_1(M)/\sigma_2(M)<100$ and $\Minv$ otherwise. 
We call $\alpha$ the {\em coupling parameter} for reasons that are explained
in Section~\ref{sec:GEinterpretation}. 

Figure~\ref{fig:SingularityAtPole} shows the importance of constructing approximants
that preserve the BMC-I structure of functions on the sphere since an artificial pole 
singularity is introduced in each rank $1$ term when the structure is not, 
reducing the accuracy for derivatives. A close inspection of subplots (e) and (f) reveals pole singularities. 

\begin{figure} 
\begin{center}
\begin{minipage}{0.2\textwidth}
\begin{overpic}[width=\textwidth]{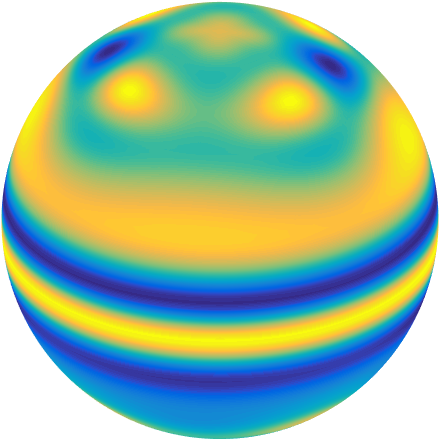} 
 \put(0,85) {(a)}
 \end{overpic}
 \end{minipage}
\begin{minipage}{0.2\textwidth}
\begin{overpic}[width=\textwidth]{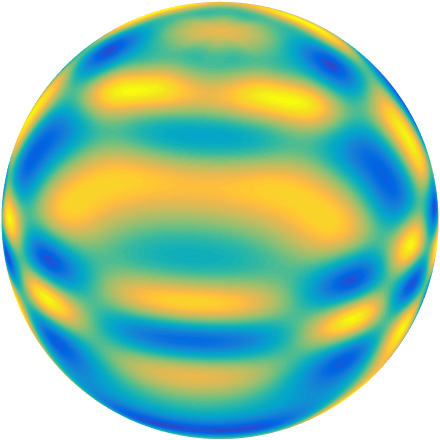} 
 \put(0,85) {(b)}
 \end{overpic}
 \end{minipage}
\begin{minipage}{0.2\textwidth}
\begin{overpic}[width=\textwidth]{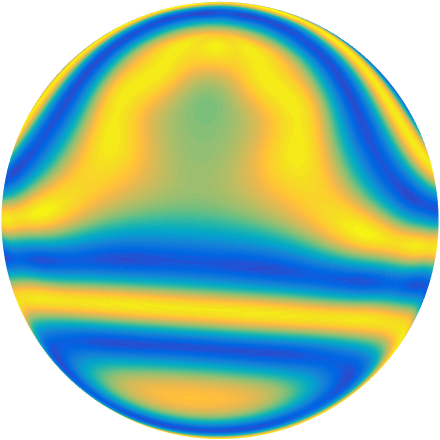} 
 \put(0,85) {(c)}
 \end{overpic}
 \end{minipage}
\begin{minipage}{0.2\textwidth}
\begin{overpic}[width=\textwidth]{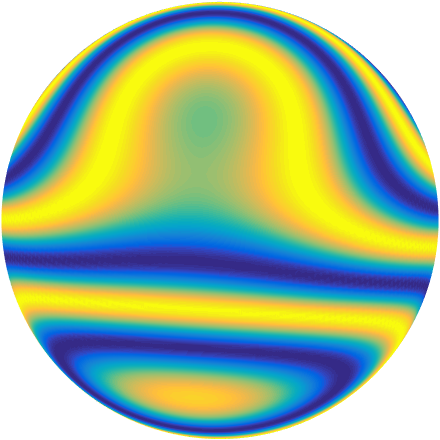} 
 \put(0,85) {(d)}
 \end{overpic}
 \end{minipage}
 
 \vspace{.3cm}
 
\begin{minipage}{0.2\textwidth}
\begin{overpic}[width=\textwidth]{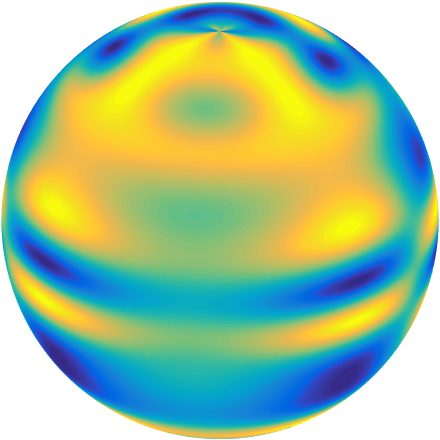} 
 \put(0,85) {(e)}
 \end{overpic}
 \end{minipage}
\begin{minipage}{0.2\textwidth}
\begin{overpic}[width=\textwidth]{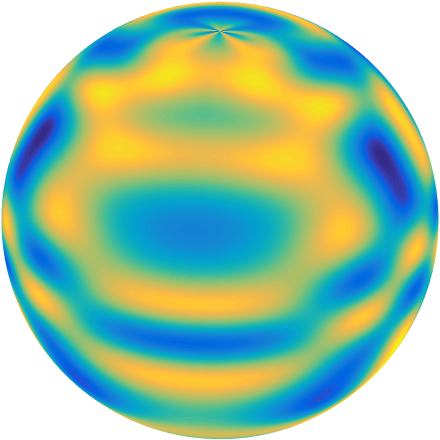} 
 \put(0,85) {(f)}
 \end{overpic}
 \end{minipage}
\begin{minipage}{0.2\textwidth}
\begin{overpic}[width=\textwidth]{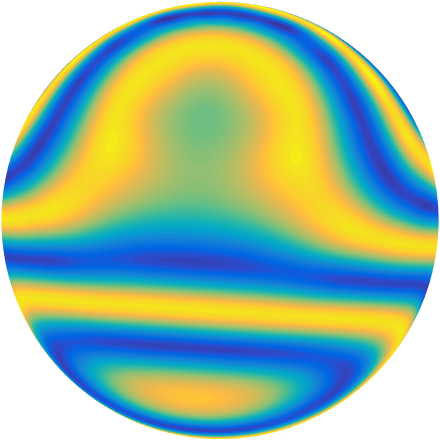} 
 \put(0,85) {(g)}
 \end{overpic}
 \end{minipage}
\begin{minipage}{0.2\textwidth}
\begin{overpic}[width=\textwidth]{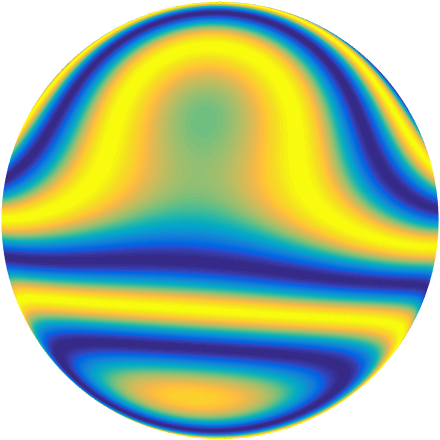} 
 \put(0,85) {(h)}
 \end{overpic}
 \end{minipage}
 \end{center}
\caption{Low rank approximants to $\tilde{f}$ in~\eqref{eq:sphereTestFunc}. 
Figures (a)-(d) are the respective rank 2, 4, 8, and 16 approximants to $\tilde{f}$ constructed by the structure-preserving GE procedure 
in Section~\ref{sec:StructurePreservingGE}. Figures (e)-(h) are the respective rank 2, 4, 8, and 16 approximants to $\tilde{f}$ constructed by the GE procedure 
that is not designed to preserve the BMC-I structure~\cite{Chebfun2}. In figures (e) and (f) one can see that a pole singularity is introduced when 
structure is not preserved.}
\label{fig:SingularityAtPole}
\end{figure}

Our GE procedure samples $\tilde{f}$ along a sparse collection of lines, known as a {\em skeleton}~\cite{Chebfun2}, 
to construct a low rank approximation. This can be seen from the GE step in~\eqref{eq:GEstructurePreservingStepPseudoinverse}, which
only requires 1D slices of $\tilde{f}$. Thus, 
$\tilde{f}$ is sampled on a grid that is not clustered near the pole of the sphere, 
unless it has properties that requires this. Instead, the sample points used for approximating 
$\tilde{f}$ are determined adaptively by the GE procedure and are 
composed of a criss-cross of 1D uniform grids. This means that we can take 
advantage of the low rank structure of functions, while still employing fast FFT-based algorithms.  Figure~\ref{fig:PoleResolution} 
shows the skeleton selected by the GE procedure when constructing a rank $17$ 
approximant of the function $f(x,y,z) = \cos(xz-\sin y)$.

\subsection{Another interpretation of our Gaussian elimination procedure}\label{sec:GEinterpretation}
The GE procedure in Figure~\ref{fig:GaussianElimination}
that employs $2\times 2$ pivots can also be interpreted as two coupled
GE procedures, with a coupling strength of $0\leq\alpha\leq 1$. 
This interpretation connects our method for approximating functions on the sphere to existing approximation techniques 
involving even-odd modal decompositions~\cite{Yee1980}. 
%It also will allow us to derive the convergence properties of the GE procedure (see Section~\ref{sec:GEconvergence}).  

Let $\tilde{f}$ be a BMC function and $M$ be the first $2\times 2$ pivot matrix defined in \eqref{eq:pivotMatrix} and written as
$M = \begin{bmatrix} a & b \\ b & a \end{bmatrix}$.  A straightforward derivation shows $\Minv$ can be expressed as
% This is not correct:
% the singular value decomposition of $\Minv$ can be calculated explicitly as
%\begin{equation}
% \Minv = U \begin{bmatrix} |a + b| \cr & |a - b| \end{bmatrix} U^*, \qquad U = \frac{1}{\sqrt{2}}\begin{bmatrix}1 & 1 \cr 1 & -1 \end{bmatrix}.
%\label{eq:explicitSVD}
%\end{equation}
\begin{align}
\Minv = \begin{bmatrix} \frac{1}{\sqrt{2}} & \frac{1}{\sqrt{2}} \\ \frac{1}{\sqrt{2}} & -\frac{1}{\sqrt{2}} \end{bmatrix}
\begin{bmatrix} 
\meven & \phantom{\frac{1}{\sqrt{2}}} \\
\phantom{\frac{1}{\sqrt{2}}} & \modd 
\end{bmatrix}
\begin{bmatrix} \frac{1}{\sqrt{2}} & \frac{1}{\sqrt{2}} \\ \frac{1}{\sqrt{2}} & -\frac{1}{\sqrt{2}} \end{bmatrix},
\label{eq:SpectralDecomp}
\end{align}
where the possible values of $\meven$ and $\modd$ are given by
\begin{align}
(\meven,\modd) =
\begin{cases}
(1/(a+b),0), & \text{if $|a-b|<\alpha |a+b|$}, \\
(0,1/(a-b)), & \text{if $|a+b|<\alpha |a-b|$}, \\
(1/(a+b),1/(a-b)), & \text{otherwise}. \\
\end{cases}
\label{eq:mvalues}
\end{align}
Recall from the previous section that $\alpha = \epsilon/\sigma_1(M)=\epsilon/\max\{|a+b|,|a-b|\}$ and we set $\alpha=1/100$.   
If neither the first or second cases applies in~\eqref{eq:mvalues}, then $\Minv=M^{-1}$.
%Equation \eqref{eq:SpectralDecomp} is just the spectral decomposition of $\Minv$.

\begin{figure}[t!]
\begin{center}
 \begin{minipage}{.4\textwidth}
  \includegraphics[width=0.7\textwidth]{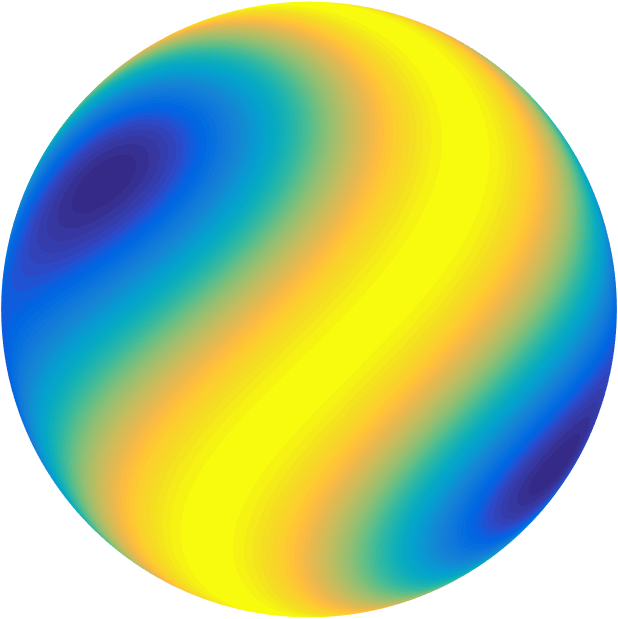}
 \end{minipage}
 \begin{minipage}{.4\textwidth}
  \includegraphics[width=0.7\textwidth]{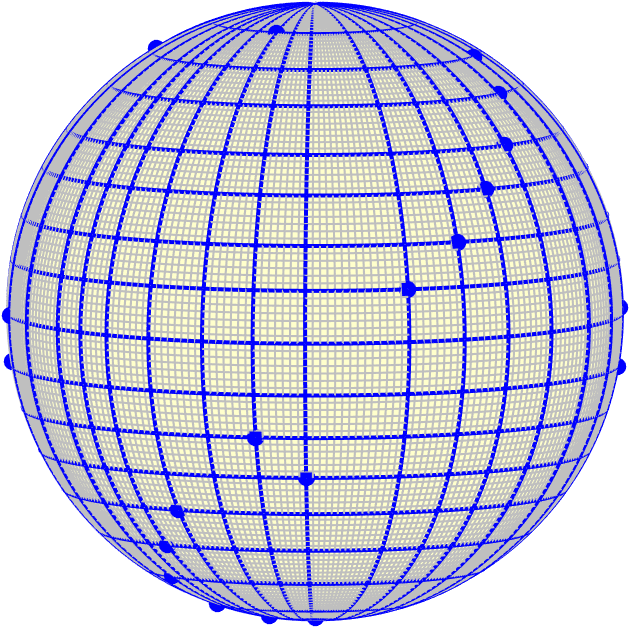}
 \end{minipage}
 \end{center}
 \caption{Left: The function $f(x,y,z)=\cos(xz-\sin y)$ on the unit sphere.  Right: The ``skeleton'' used to approximate $f$ by \texttt{spherefun}.
 The blue dots are the entries of the $2\times 2$ pivot matrices used by GE. The GE procedure 
 only samples $f$ along the blue lines. The underlying tensor grid (in gray) shows 
 the sampling grid required without low rank techniques, which cluster near the poles.}
 \label{fig:PoleResolution} 
\end{figure}

We can use \eqref{eq:SpectralDecomp} to re-write the GE step in~\eqref{eq:GEstructurePreservingStepPseudoinverse} as
\begin{equation}
\begin{aligned} 
  \tilde{f}(\lambda,\theta) \quad \longleftarrow\quad \tilde{f}(\lambda,\theta)& -\frac{\meven}{2}\left(\tilde{f}(\lambda^*-\pi,\theta) + \tilde{f}(\lambda^*,\theta)\right)\left(\tilde{f}(\lambda,\theta^*)+\tilde{f}(\lambda,-\theta^*)\right)\\
  &-\frac{\modd}{2}\left(\tilde{f}(\lambda^*-\pi,\theta) - \tilde{f}(\lambda^*,\theta)\right)\left(\tilde{f}(\lambda,\theta^*)-\tilde{f}(\lambda,-\theta^*)\right).
\end{aligned}
\label{eq:FirstGEstepSplit}
\end{equation}
Now we make a key observation.  Let $\feven = g + h$ and $\fodd = g-h$, where $g$ and $h$ are defined in~\eqref{eq:BMCsphere}, and note that we can decompose $\tilde{f}$ into a sum of two BMC functions:
\begin{align}
\tilde{f} = 
\frac12
\underbrace{
\begin{bmatrix}
\feven & \feven \\
{\tt flip}(\feven) & {\tt flip}(\feven)
\end{bmatrix}}_{ =\fteven}
+
\frac12
\underbrace{
\begin{bmatrix}
\fodd & -\fodd \\
-{\tt flip}(\fodd) & {\tt flip}(\fodd)
\end{bmatrix}}_{ =\ftodd}.
\label{eq:EvenOddSplit}
\end{align}
Using the definitions of $\fteven$ and $\ftodd$ in this decomposition,  the GE step in \eqref{eq:FirstGEstepSplit} can then be written as
\begin{equation}
  \tilde{f}(\lambda,\theta) \; \longleftarrow\;  \frac12(\fteven(\lambda,\theta) - \meven\fteven(\lambda^*,\theta)\fteven(\lambda,\theta^*)) + \frac12(\ftodd(\lambda,\theta) - \modd\ftodd(\lambda^*,\theta)\ftodd(\lambda,\theta^*)),
\label{eq:GEstepSplit}
\end{equation}
%\begin{equation}
%\begin{aligned} 
%  \tilde{f}(\lambda,\theta) \quad \longleftarrow\quad & \frac12(\fteven(\lambda,\theta) - \meven\fteven(\lambda^*,\theta)\fteven(\lambda,\theta^*))\;\; + \\ 
%  & \frac12(\ftodd(\lambda,\theta) - \modd\ftodd(\lambda^*,\theta)\ftodd(\lambda,\theta^*)),
%\end{aligned}
%\label{eq:GEstepSplit}
%\end{equation}
which hints that the step is equivalent to two coupled GE steps on the functions $\fteven$ and $\ftodd$.  This connection can be made complete by noting that $a+b=\fteven(\lambda^*,\theta^*)$ and $a-b=\ftodd(\lambda^*,\theta^*)$ in the definition of $\meven$ and $\modd$ in \eqref{eq:mvalues}.  

The coupling of the two GE steps in~\eqref{eq:GEstepSplit} is through the parameter $\alpha$ used to define $\Minv$.   
From \eqref{eq:mvalues} we see that when $|\ftodd(\lambda^*,\theta^*)| < \alpha |\fteven(\lambda^*,\theta^*)|$, the GE step 
applies only to $\fteven$.  Since we select the pivot matrix $M$ so that $\sigma_1(M)$ (see Figure \ref{fig:GaussianElimination}) 
is maximal, this step corresponds to GE with complete pivoting on $\fteven$ and does not alter $\ftodd$.  
Similarly, a GE step with complete pivoting is done on $\ftodd$ when $|\fteven(\lambda^*,\theta^*)| < \alpha |\ftodd(\lambda^*,\theta^*)|$.  
If neither of these conditions is met, then \eqref{eq:GEstepSplit} corresponds to an interesting mix between 
a GE step on $\fteven$ (or $\ftodd$) with complete pivoting and another GE step on $\ftodd$ (or $\fteven$) with a nonstandard pivoting strategy.

 If one takes $\alpha = 1$, then the GE steps in~\eqref{eq:GEstepSplit} on $\fteven$ and $\ftodd$ are fully 
 decoupled. In this regime the algorithm in Section~\ref{sec:StructurePreservingGE} 
 is equivalent to applying GE with complete pivoting to $\fteven$ and 
 $\ftodd$ independently. There is a fundamental issue with this. The 
 rank $1$ terms attained from applying GE to $\fteven$ and $\ftodd$ can not be properly 
 ordered when constructing a low-rank approximant of $\tilde{f}$. 
 %This requires checking for the maximum value of $\fteven$ and $\ftodd$ at each step, but then only using 
 %one of these values, resulting in only a rank 1 update\footnote{Sorting by the magnitude of the pivots does not 
 %recover the proper ordering of the elimination steps~\cite{Chebfun2} (correct citation?)}.  
 By selecting $\alpha<1$, the GE steps are coupled and the rank $1$ terms are (partially) ordered. This also means that 
 a GE step can achieve a rank $2$ update, which reduces the number of pivot searches and improves the 
 overall efficiency of the spherefun constructor.

The decomposition $\tilde{f} = \frac12\feven + \frac12\fodd$ is also important for identifying symmetries that a BMC function obtained 
from the DFS method must possess.   From~\eqref{eq:EvenOddSplit} we see that the function 
$\fteven$ is an even function in $\theta$ and $\pi$-periodic in $\lambda$, while $\ftodd$ is an odd function in $\theta$ and 
$\pi$-antiperiodic\footnote{A function $f(x)$ is $\pi$-antiperiodic if $f(x+\pi) = -f(x)$ for $x\in\mathbb{R}$.} in $\lambda$.   
This even-periodic/odd-antiperiodic decomposition of a BMC function has been used in various guises in the DFS method as 
detailed in~\cite{Yee1980}.  However, in these studies the representations of $\tilde{f}$ were constructed in a purely modal fashion, where 
the symmetries were enforced directly on the 2D Fourier coefficients of $\tilde{f}$.  The representation of $\tilde{f}$ in~\eqref{eq:EvenOddSplit} 
shows how to enforce these symmetries in a purely nodal fashion, i.e., on the values of the function, which appears to be a new observation. 
Our GE procedure produces a low rank approximation to $\tilde{f}$ that preserves these even-periodic and odd-antiperiodic symmetries.% as 
%indicated in \eqref{eq:GEstepSplit}.

We conclude this section by noting another important result of the decomposition of the pseudoinverse of the $2\times2$ pivot 
matrices in~\eqref{eq:SpectralDecomp}.  Applying this decomposition to each pivot matrix after the GE procedure in 
Figure~\ref{fig:GaussianElimination} terminates, allows us to write the low rank function that results from the 
algorithm in the form of \eqref{eq:lowrankRepresentation}, with $d_j$ given by the eigenvalues of the  pseudoinverse of 
the pivot matrices.  Furthermore, using \eqref{eq:GEstepSplit}, we can split the approximation as
\begin{align}
 \tilde{f}(\lambda,\theta) \approx \sum_{j=1}^K d_j c_j(\theta)r_j(\lambda) =  \sum_{j=1}^{\Keven} \deven_j \ceven_j(\theta)\reven_j(\lambda) + \sum_{j=1}^{\Kodd} \dodd_j \codd_j(\theta)\rodd_j(\lambda),
\label{eq:lowrankRepresentationSplit}
\end{align} 
where $\Keven + \Kodd=K$. Here, the functions $\ceven_j(\theta)$ and $\reven_j(\lambda)$ for $1\leq j\leq K$ are even and $\pi$-periodic, while 
$\codd_j(\theta)$ and $\rodd_j(\lambda)$ for $1\leq j\leq K$ are odd and $\pi$-antiperiodic.  If $\tilde{f}$ is non-zero at the poles and \eqref{eq:ZeroOut} is employed in the first step of the GE procedure, then $\ceven_1(\theta) = \tilde{f}(\lambda^{*},\theta)$, $\reven_1(\lambda)=1$, and $\deven_1=1$.  The two summations after the last equal sign in \eqref{eq:lowrankRepresentationSplit} provide low rank approximations to $\fteven$ and $\ftodd$, respectively.  The BMC-I structure of the approximation \eqref{eq:lowrankRepresentationSplit} then becomes obvious.

\subsection{Analyzing the structure-preserving Gaussian elimination procedure}\label{sec:GEconvergence}
GE on matrices with partial pivoting is known to be theoretically
unstable in the worst case because each step can increase the absolute magnitude of 
the matrix entries by a factor of $2$. Even though in 
practice this instability is extraordinarily rare, for a convergence 
theorem we need to control the worst-case behavior. 

The so-called {\em growth factor} quantifies the worst possible increase in the absolute maximum 
after a rank one update. The following theorem gives a bound on the growth factor for our structure-preserving 
GE procedure in Figure~\ref{fig:GaussianElimination}.
\begin{lemma} 
 The growth factor for the structure-preserving GE procedure in Figure~\ref{fig:GaussianElimination} 
 is $\leq\max(3,\sqrt{1+4/\alpha})$, where $\alpha$ is the coupling parameter. 
\end{lemma}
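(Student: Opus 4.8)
The plan is to reduce the statement to a bound on a single Gaussian elimination step, since the growth factor measures the worst increase in $\|\cdot\|_\infty$ produced by one update. Write the residual before the step as $\tilde{f}$ with $N=\|\tilde{f}\|_\infty$, let $(\lambda^*,\theta^*)$ be the pivot, $M$ the associated $2\times2$ matrix, and $\tilde{f}'$ the residual afterwards. My first move is to diagonalize the step: using the spectral form $\Minv = H\,\mathrm{diag}(\meven,\modd)\,H$ with $H$ the symmetric orthogonal factor in~\eqref{eq:SpectralDecomp}, the correction subtracted in~\eqref{eq:GEstructurePreservingStepPseudoinverse} splits as in~\eqref{eq:GEstepSplit} into an even piece acting on $\fteven$ and an odd piece acting on $\ftodd$. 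I would record the elementary identities $\fteven(\lambda,\theta)=\tilde{f}(\lambda,\theta)+\tilde{f}(\lambda,-\theta)$ and $\ftodd(\lambda,\theta)=\tilde{f}(\lambda,\theta)-\tilde{f}(\lambda,-\theta)$, from which $|\fteven|,|\ftodd|\le 2N$ pointwise and, at the pivot, $|a+b|=\sigma_1(M)$, $|a-b|=\sigma_2(M)$ with $\sigma_1=\max(\|\fteven\|_\infty,\|\ftodd\|_\infty)\ge N$ by complete pivoting.

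\textbf{The rank-one cases.} Next I would split into the three cases of~\eqref{eq:mvalues}. In cases~1 and~2 exactly one of $\meven,\modd$ is nonzero, so the step is a single rank-one elimination with \emph{complete} pivoting applied to $\fteven$ (respectively $\ftodd$), while the other component is untouched. The classical one-step growth bound of $2$ for complete pivoting applies to the updated component; combining this with the untouched component through $\tilde{f}'=\tfrac12\fteven'+\tfrac12\ftodd'$ and the pointwise bounds above yields a growth factor of at most $3$. This accounts for the $\max(3,\cdot)$ branch, and since $\sqrt{1+4/\alpha}<3$ when $\alpha$ is near $1$, it is also the operative bound in the decoupled regime.

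\textbf{The rank-two case (the crux).} The difficulty is case~3, where $\Minv=M^{-1}$ and the update has genuine rank two. Writing the subtracted correction as $\mathbf r^{\mathsf T}\Minv\mathbf c$ with $\mathbf r,\mathbf c$ the relevant $1$D slices of $\tilde{f}$, I would estimate it in the Euclidean norm via $|\mathbf r^{\mathsf T}\Minv\mathbf c|\le \|\mathbf r\|_2\,\|\Minv\|_2\,\|\mathbf c\|_2$ and use three facts: $\|\Minv\|_2=1/\sigma_2$; the condition that triggers case~3, namely $\sigma_2>\alpha\sigma_1$; and $\sigma_1\ge N$. The slice $2$-norms are controlled by $\|\mathbf r\|_2,\|\mathbf c\|_2\le\sqrt2\,N$ through the parallelogram identity $\tilde{f}(\cdot,\theta)^2+\tilde{f}(\cdot,-\theta)^2=\tfrac12(\fteven^2+\ftodd^2)$. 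Feeding these into $|\tilde{f}'|\le|\tilde{f}|+|\mathbf r^{\mathsf T}\Minv\mathbf c|$, and combining the even and odd contributions in quadrature rather than linearly using the orthogonal factor $H$, is where the constant $\sqrt{1+4/\alpha}$ should be made to appear.

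\textbf{Expected obstacle.} I expect this last estimate to be the genuine obstacle, and it is delicate. A near-singular pivot ($\sigma_2\approx\alpha\sigma_1$) can amplify the odd slice, and a naive entrywise accounting only delivers a factor of order $1/\alpha$; beating this down to order $1/\sqrt{\alpha}$ is exactly what the orthogonality of the even/odd directions must buy, by forcing the two corrections to add in quadrature and by using the precise interplay of $\sigma_2>\alpha\sigma_1$, $\sigma_1\ge N$, and $\|\mathbf r\|_2,\|\mathbf c\|_2\le\sqrt2\,N$. I would close by verifying that the resulting case-3 bound is dominated by $3$ whenever $\sqrt{1+4/\alpha}\le 3$, so that the two regimes assemble into the single bound $\max(3,\sqrt{1+4/\alpha})$.
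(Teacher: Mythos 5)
Your reduction to a single step and your treatment of the two rank-one cases are sound and essentially reproduce the paper's argument in different clothing: the paper bounds the rank-one correction directly from the spectral form of $\Minv$, using $\sigma_1(M)\geq\|\tilde{f}\|_\infty$ (which follows from the pivoting strategy, as you correctly note) to get correction $\leq 2\|\tilde{f}\|_\infty^2/\sigma_1(M)\leq 2\|\tilde{f}\|_\infty$ and hence growth $\leq 3$; your route through the even/odd split with the classical complete-pivoting growth bound of $2$ on the updated component, combined with the untouched component, lands on the same constant.

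The rank-two case, however, contains a genuine gap, and it is conceptual rather than technical. The square root in $\sqrt{1+4/\alpha}$ does not come from any orthogonality or "adding in quadrature" refinement. The paper bounds the full rank-two correction crudely: since $\Minv = M^{-1}$ in this case, $\|M^{-1}\|_{\max}\leq \|\tilde{f}\|_\infty/\det(M) = \|\tilde{f}\|_\infty/(\sigma_1(M)\sigma_2(M)) \leq 1/(\alpha\|\tilde{f}\|_\infty)$, so the four terms of the correction give per-step growth $1+4/\alpha$; the square root then appears purely from the bookkeeping convention stated just before the lemma --- the growth factor measures the increase \emph{per rank-one update} --- and a rank-two step counts as two rank-one updates, so the per-rank-one growth is $\sqrt{1+4/\alpha}$. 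Note that your own Cauchy--Schwarz estimate already yields per-step growth $1+2/\alpha$, slightly better than the paper's entrywise bound, so with this one observation your argument closes immediately (indeed with the sharper constant $\sqrt{1+2/\alpha}$). By contrast, the stronger statement you set out to prove --- that the sup norm of the residual grows by only $O(1/\sqrt{\alpha})$ in a single rank-two step --- is false in general: with $\sigma_2(M)=\alpha\sigma_1(M)$ and odd slices attaining values of size $\sigma_1(M)$ away from the pivot, the odd correction $\modd\,\ftodd(\lambda^*,\theta)\ftodd(\lambda,\theta^*)$ genuinely has size of order $\sigma_1(M)/\alpha$, so no quadrature mechanism can deliver what you were hoping for. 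The obstacle you flagged as delicate is in fact a dead end; the missing ingredient is the definition of the growth factor, not a sharper inequality.
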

\begin{proof} 
It is sufficient to examine the growth factor of the first GE step.  
Let $M$ be the first $2\times 2$ pivot matrix so that $M$ is 
the matrix of the form in~\eqref{eq:pivotMatrix} that maximizes 
$\sigma_1(M)$. Since $M$ maximizes $\sigma_1(M)$ and using \eqref{eq:SingularValues}, 
we have $\sigma_1(M)\geq \|\tilde{f}\|_\infty$, where $\|\tilde{f}\|_\infty$ denotes the absolute maximum of $\tilde{f}$ on $[-\pi,\pi]^2$. 
There are two cases to consider.

\underline{Case $1$: $\sigma_2(M)<\alpha\sigma_1(M)$.} Here $\Minv$ in~\eqref{eq:GEstructurePreservingStepPseudoinverse} with 
$\epsilon = \alpha\sigma_1(M)$ is of rank $1$ and the explicit formula for the spectral 
decomposition of $\Minv$ in~\eqref{eq:SpectralDecomp} shows that 
\[
 \left\| \tilde{f} - \begin{bmatrix}\tilde{f}(\lambda^*-\pi,\cdot) & \tilde{f}(\lambda^*,\cdot)\\[3pt] \end{bmatrix} \Minv \begin{bmatrix}\tilde{f}(\cdot,\theta^*)\\[3pt] \tilde{f}(\cdot,-\theta^*)\end{bmatrix}\right\|_\infty 
 \leq \|\tilde{f}\|_\infty + \frac{2\|\tilde{f}\|_\infty^2}{\sigma_1(M)}\leq 3 \|\tilde{f}\|_\infty,
\]
where in the last equality we used $\sigma_1(M)\geq \|\tilde{f}\|_\infty$. Thus, the growth 
factor here is $\leq 3$.

\underline{Case $2$: $\sigma_2(M)\geq\alpha\sigma_1(M)$.} Here $\Minv=M^{-1}$ and 
\[
 \|M^{-1}\|_{\max} \leq \frac{\|\tilde{f}\|_\infty}{{\rm det}(M)} = \frac{\|\tilde{f}\|_\infty}{\sigma_1(M)\sigma_2(M)} \leq \frac{\|\tilde{f}\|_\infty}{\alpha\sigma_1(M)^2}\leq \frac{1}{\alpha\|\tilde{f}\|_\infty},
\]
where $\|M^{-1}\|_{\max}$ denotes the maximum absolute entry of $M^{-1}$. Therefore, we have
\[
 \left\| \tilde{f} - \begin{bmatrix}\tilde{f}(\lambda^*-\pi,\cdot) & \tilde{f}(\lambda^*,\cdot)\\[3pt] \end{bmatrix} \Minv \begin{bmatrix}\tilde{f}(\cdot,\theta^*)\\[3pt] \tilde{f}(\cdot,-\theta^*)\end{bmatrix}\right\|_\infty 
 \leq \|\tilde{f}\|_\infty + \frac{4\|\tilde{f}\|_\infty}{\alpha}\leq \left(1 + \frac{4}{\alpha}\right) \|\tilde{f}\|_\infty.
\]
Thus, the growth factor here is $\leq\sqrt{1+4/\alpha}$ because the GE 
update is of rank $2$.
\end{proof}

Bounding the growth factor leads to a GE convergence result for continuous functions $\tilde{f}(\lambda,\theta)$ 
that satisfy the following property: for each fixed $\theta\in[-\pi,\pi]$, $\tilde{f}(\cdot,\theta)$ is an analytic function in a sufficiently large neighborhood of the complex plane
containing $[-\pi,\pi]$. In approximation theory it is common to 
consider the neighborhood known as the {\em stadium} of radius 
$\beta>0$, denoted by $S_\beta$. 

\begin{definition} 
Let $S_\beta$ with $\beta>0$ be the ``stadium'' of radius $\beta$ in the complex plane consisting of all numbers lying at a distance $\leq \beta$ from 
an interval $[a,b]$, i.e., 
\[
S_\beta = \left\{z\in\mathbb{C} : \inf_{x\in[a,b]} |x - t| \leq \beta \right\}.
\]
\end{definition} 

In the statement of the following theorem the roles of $\lambda$ and $\theta$ can be 
exchanged. 
\begin{theorem} 
 Let $\tilde{f}:[-\pi,\pi]^2\rightarrow\mathbb{C}$ be a 
 BMC function such that $\tilde{f}(\lambda,\cdot)$ is continuous for any $\lambda\in[-\pi,\pi]$ 
 and $\tilde{f}(\cdot,\theta)$ is analytic and uniformly bounded 
 in a stadium $S_\beta$ of radius $\beta=\max(3,\sqrt{1+4/\alpha}) \rho \pi$, $\rho>1$, for any $\theta\in[-\pi,\pi]$. 
 Then, the error after $k$ GE steps decays to zero as $k\rightarrow \infty$, i.e.,
 \[
  \|\tilde{e}_k\|_\infty\rightarrow 0,\qquad k\rightarrow \infty.
 \]
 That is, the sequence of approximants constructed by the structure-preserving 
 GE procedure for $\tilde{f}$ 
 in Figure~\ref{fig:GaussianElimination} converges uniformly to $\tilde{f}$.
 \label{thm:GEconvergence}
\end{theorem}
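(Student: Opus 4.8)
My plan is to reduce the theorem to a one-dimensional interpolation estimate via the even--odd splitting of Section~\ref{sec:GEinterpretation}, and then to play a contour-integral bound (which produces geometric decay) against the growth-factor bound of the preceding lemma (which controls worst-case amplification). First I would pass to the even and odd parts. By~\eqref{eq:EvenOddSplit} we have $\tilde f=\tfrac12\fteven+\tfrac12\ftodd$, and~\eqref{eq:GEstepSplit} shows that the residual inherits this splitting, $\tilde e_k=\tfrac12 r^{\even}+\tfrac12 r^{\odd}$, where $r^{\even},r^{\odd}$ are the residuals of the two coupled eliminations run on $\fteven$ and $\ftodd$. Hence $\|\tilde e_k\|_\infty\le\tfrac12\|r^{\even}\|_\infty+\tfrac12\|r^{\odd}\|_\infty$, and it suffices that each tends to $0$. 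Since $\fteven(\cdot,\theta)$ and $\ftodd(\cdot,\theta)$ are sums and differences of the two slices of $f$ defining $g$ and $h$, they are analytic and uniformly bounded on $S_\beta$ for every $\theta$. Moreover,~\eqref{eq:mvalues}--\eqref{eq:GEstepSplit} tells us that at each step the dominant part receives an ordinary rank-one elimination whose pivot equals its current maximum modulus --- this is complete pivoting on that part, because $\sigma_1(M)=\max\{|\feven(\lambda^*,\theta^*)|,|\fodd(\lambda^*,\theta^*)|\}$ in~\eqref{eq:SingularValues} is maximized over the pivot search --- while in a rank-two step the non-dominant part is eliminated with a pivot only within a factor $\alpha$ of its maximum.

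Next I would set up the interpolation estimate for one part, say $r^{\even}$ (the odd case is identical). After the $K^{\even}$ steps that have updated $\fteven$, the residual $r^{\even}(\cdot,\theta)$ vanishes, for every fixed $\theta$, at the $K^{\even}$ even pivot abscissae $\lambda_1,\dots,\lambda_{K^{\even}}\in[-\pi,\pi]$, by the usual vanishing property of Gaussian elimination on the columns it has processed. As $r^{\even}(\cdot,\theta)$ is analytic on $S_\beta$, writing $\ell(\lambda)=\prod_{j}(\lambda-\lambda_j)$ gives
\[
 r^{\even}(\lambda,\theta)=\frac{\ell(\lambda)}{2\pi i}\oint_{\partial S_\beta}\frac{r^{\even}(w,\theta)}{\ell(w)\,(w-\lambda)}\,dw,
\]
valid because $r^{\even}/\ell$ is analytic inside $S_\beta$. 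Bounding the integrand yields $\|r^{\even}\|_\infty\le C\,\kappa^{K^{\even}}\sup_{\theta}\sup_{w\in\partial S_\beta}|r^{\even}(w,\theta)|$, where $\kappa$ compares the size of $|\ell|$ on $[-\pi,\pi]$ with its size on $\partial S_\beta$; when the $\lambda_j$ are reasonably spread (as complete pivoting tends to ensure) this ratio is geometric in the distance $\beta=G\rho\pi$, with $G=\max(3,\sqrt{1+4/\alpha})$, giving $\kappa\simeq 1/(G\rho)$ per node.

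It remains to bound the stadium norm $\sup_{\theta}\sup_{\partial S_\beta}|r^{\even}|$. Re-running the estimate of the preceding lemma with $\lambda$ placed on $\partial S_\beta$ --- so that the column factors $r^{\even}(\lambda^*-\pi,\theta),r^{\even}(\lambda^*,\theta)$ remain real and bounded by the residual's maximum, while only the row slices are taken on $\partial S_\beta$ --- shows that this stadium norm grows by at most the factor $G$ per unit increase of rank. The factor $G$ in the radius $\beta=G\rho\pi$ is chosen precisely so that this amplification is cancelled by the interpolation decay: the product $\kappa^{K}G^{K}$ behaves like $\rho^{-K}\to0$ as the rank $K$ grows, since $\rho>1$. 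Finally, because the pivot always maximizes $\sigma_1(M)=\max\{\|r^{\even}\|_\infty,\|r^{\odd}\|_\infty\}$ (up to the coarse-grid search), a part cannot stall while it is nonzero: whichever residual is larger is reduced next, so both $K^{\even}$ and $K^{\odd}$ grow without bound (or a part is already exactly recovered, cf.\ Theorem~\ref{thm:BMCrankDrop}). Hence $\|r^{\even}\|_\infty,\|r^{\odd}\|_\infty\to0$ and therefore $\|\tilde e_k\|_\infty\to0$.

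I expect the crux to be the joint bookkeeping of growth and decay across the coupling. The dominant part enjoys genuine complete pivoting (growth bounded by a fixed constant per step and well-spread nodes), but the non-dominant part is eliminated at a pivot only $\alpha$-close to its maximum, so its stadium norm can in principle be amplified by as much as $1/\alpha$ in a single rank-two step, and its nodes are only $\alpha$-well distributed. Showing that these weak steps remain dominated by the geometric decay --- which is exactly why $\beta$ is taken proportional to $G=\sqrt{1+4/\alpha}$ rather than to a fixed multiple of $\pi$ --- is the delicate part of the argument, and it is here that the growth-factor lemma, the node distribution under $\alpha$-pivoting, and the even--odd coupling must all be reconciled.
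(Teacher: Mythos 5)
Your overall skeleton coincides with the paper's: decompose the residual into even-periodic and odd-antiperiodic parts via \eqref{eq:EvenOddSplit}, interpret the structure-preserving step as two coupled GE procedures as in \eqref{eq:GEstepSplit}, prove each part converges using the analyticity hypothesis together with the growth-factor lemma, and close with an argument that neither part can stall. The genuine difference is in the middle: the paper does \emph{not} prove the one-dimensional convergence engine at all --- it counts steps as $k=\keven+\kodd+k^{0}$ and invokes Theorem~8.2 of \cite{Townsend_15_01} (convergence of GE on functions analytic in a stadium) as a black box for $\eteven_k$ and $\etodd_k$, whereas you re-derive that engine inline via a Hermite contour-integral bound. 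Your route is more self-contained and makes visible where the radius $\beta=\max(3,\sqrt{1+4/\alpha})\,\rho\pi$ enters; the paper's route buys brevity and, crucially, offloads the delicate constant bookkeeping to the citation.

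Three points in your inlined derivation need repair. First, drop the appeal to the pivot abscissae being ``reasonably spread'': complete pivoting guarantees nothing about node distribution, and the standard argument needs only the crude bounds $|\lambda-\lambda_j|\le 2\pi$ for $\lambda\in[-\pi,\pi]$ and $|w-\lambda_j|\ge\beta$ for $w\in\partial S_\beta$. But those bounds give $\kappa\le 2\pi/\beta=2/(G\rho)$, not your $\kappa\simeq 1/(G\rho)$, so the claimed cancellation $\kappa^{K}G^{K}\sim\rho^{-K}$ does not follow as written: your accounting yields $(2/\rho)^{K}$, i.e., convergence only for $\rho>2$, weaker than the stated $\rho>1$; recovering the full range is exactly the sharper step-by-step analysis contained in the cited theorem. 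Second, your bound on the stadium norm --- growth by ``at most $G$ per step'' --- is asserted rather than proven: the paper's lemma controls the real sup norm only, and in a rank-two step the non-dominant part is eliminated with a pivot that may be smaller than its maximum by a factor $\alpha$, so the quotient multiplying the stadium-side slice can be as large as $1/\alpha$; extending the lemma to complex $\lambda$ therefore requires tracking the real-domain growth of the column factors separately, which your parenthetical does not do. Third, your closing claim that ``whichever residual is larger is reduced next, so both $\Keven$ and $\Kodd$ grow without bound'' is too quick, since odd-only steps can occur while $\|\eteven_k\|_\infty$ is large. The paper makes this rigorous by contradiction, and you should too: if the even part were never updated after step $K$ while $\|\eteven_k\|_\infty>\delta$, then all later steps update only the odd part, so by the already-established one-dimensional convergence one may pick $K^*>K$ with $\|\etodd_{K^*}\|_\infty<\delta$; an odd-only step at $K^*+1$ would force, via \eqref{eq:mvalues} and \eqref{eq:SingularValues}, $\sigma_1(M)=|a-b|\le\|\etodd_{K^*}\|_\infty<\delta$, contradicting the pivot maximization $\sigma_1(M)\ge\|\eteven_{K^*}\|_\infty>\delta$. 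With those three repairs your argument is a legitimate, self-contained alternative to the paper's citation-based proof.
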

\begin{proof} 
Let $\tilde{e}_k$ be the error after $k$ GE steps in Figure~\ref{fig:GaussianElimination}. 
Since $\tilde{e}_k$ is a BMC function for $k\geq 0$, $\tilde{e}_k$ can be decomposed into the sum of 
an even-periodic and odd-antiperiodic function, i.e., $\tilde{e}_k = \eteven_k + \etodd_k$ for $k\geq 0$, as discussed in Section~\ref{sec:GEinterpretation}. 
Additionally,  from Section \ref{sec:GEinterpretation}, we know that the structure-preserving GE procedure 
in Figure~\ref{fig:GaussianElimination} can regarded as two coupled GE procedure on the even-periodic 
and odd-antiperiodic parts; see~\eqref{eq:GEstepSplit}.

Thus, we examine the size of $\|\eteven_k\|_\infty$ and $\|\etodd_k\|_\infty$, hoping 
to show that $\|\eteven_k\|_\infty\rightarrow0$ and $\|\etodd_k\|_\infty\rightarrow0$ as
$k\rightarrow\infty$. First, note 
that we can write $k = \keven + \kodd + k^0$, where 
 \[
  \begin{aligned} 
   \keven& = \text{ the number of GE steps in which only $\eteven_k$ is updated},\\
   \kodd& = \text{ the number of GE steps in which only $\etodd_k$ is updated},\\
   k^{0}& = \text{ the number of GE steps in which both $\eteven_k$ and $\etodd_k$ are updated}.
  \end{aligned}
 \]
Since $\eteven_0$ and $\etodd_0$ are continuous, the 
growth factor of GE at each step is $\leq\max(3,\sqrt{1+4/\alpha})$, and $\tilde{e}^{+}_0(\cdot,\theta)$
and $\etodd_0(\cdot,\theta)$ are analytic and uniformly bounded 
in the stadium $S_\beta$ of radius $\beta=  \max(3,\sqrt{1+4/\alpha})\rho \pi$, $\rho>1$, for any $\theta\in[-\pi,\pi]$. 
We know from Theorem $8.2$ in~\cite{Townsend_15_01}, which proves the convergence of 
GE on functions, that
\[
\begin{aligned}
  &\|\eteven_k\|_\infty\rightarrow 0, \qquad \text{ if } \quad \keven+k^{0}\rightarrow \infty, \\
  &\|\etodd_k\|_\infty\rightarrow 0, \qquad \text{ if } \quad \kodd+k^{0}\rightarrow \infty. \\
\end{aligned}
\]
Since either $\keven+k^{0}\rightarrow \infty$ or $\kodd+k^{0}\rightarrow \infty$ as $k\rightarrow\infty$, 
either $\|\eteven_k\|_\infty\rightarrow 0$ or $\|\etodd_k\|_\infty\rightarrow 0$. 
 
We now set out to show that both $\|\eteven_k\|_\infty\rightarrow 0$ and $\|\etodd_k\|_\infty\rightarrow 0$ as $k\rightarrow\infty$, and 
we proceed by contradiction. Suppose that $\|\eteven_k\|_\infty > \delta >0$ for all $k\geq 0$ and hence, the 
number of steps that updated the even-periodic part is finite. Let step $K$ be the last GE step that updated the even-periodic 
part. Now pick $K^*>K$ sufficiently large so 
that $\|\etodd_{K^*}\|_\infty < \delta$ and note that 
the $K^*+1>K$ GE step must update the even-periodic part, contradicting that $K$ was the last step to 
update the even-periodic part. 
We conclude that $\|\tilde{e}_k\|_\infty \leq \|\eteven_k + \etodd_k\|_\infty \leq \|\eteven_k\|_\infty  + \|\eteven_k\|_\infty\rightarrow 0$ as $k\rightarrow \infty$. 
\end{proof}

We expect that one can show that the GE procedure constructs a sequence of low rank approximants
that converges geometrically to $\tilde{f}$, under the same assumptions of Theorem~\ref{thm:GEconvergence}. Moveover,
weaker analyticity assumptions may be possible because~\cite{townsend2016gaussian} could lead to a tighter bound on 
the growth factor of our GE procedure.  In practice, 
even for functions that are a few times differentiable, the low rank approximants constructed by GE
converge to $\tilde{f}$, but our theorem does not prove this. 

\section{A collection of algorithms for numerical computations with function on the sphere}\label{sec:spherefun} 
Low rank approximants have a convenient representation for 
integrating, differentiating, evaluating, and performing many other computational tasks.  We now discuss
several of these operations, which are all available in spherefun.  In the discussion, we assume
that $f$ is a smooth function on the sphere and has been extended to a BMC function, $\tilde{f}$, using 
the DFS method. Then, we suppose that the GE procedure in Figure~\ref{fig:GaussianElimination} has 
constructed a low rank approximation of $\tilde{f}$ as in \eqref{eq:lowrankRepresentation}. 
The functions $c_j(\theta)$ and $r_j(\theta)$ in \eqref{eq:lowrankRepresentation} are $2\pi$-periodic and we represent them with Fourier expansions, i.e., 
\begin{align}
c_j(\theta) = \sum_{k=-m/2}^{m/2-1} a_k^{j} e^{i k \theta},\qquad r_j(\lambda) = \sum_{k=-n/2}^{n/2-1} b_k^{j} e^{i k \lambda},
\label{eq:FourierExpansions}
\end{align}
where $m$ and $n$ are even integers.  We could go further and split the functions $c_j$ and $r_j$ into the functions $\ceven_j$, $\reven_j$,  $\codd$, and $\rodd$ in \eqref{eq:lowrankRepresentationSplit}.  In this case the Fourier coefficients of these functions would satisfy certain properties related to even/odd and $\pi$-periodic/$\pi$-antiperiodic symmetries~\cite{TrigFun}.

In principle, the number of Fourier modes for $c_j(\theta)$ and $r_j(\lambda)$ in~\eqref{eq:FourierExpansions} 
could depend on $j$. Here, we use the same number of modes, $m$, for each $c_j(\theta)$ and, $n$, for each
$r_j(\lambda)$. This allows operations to be more efficient 
as the code can be vectorized.  

\subsubsection*{Pointwise evaluation}
The evaluation of $f(x,y,z)$ on the surface of the sphere, i.e., when $x^2 + y^2 + z^2=1$, 
is computationally very efficient. In fact this immediately follows from the low rank representation for 
$\tilde{f}$ since
\[
 f(x,y,z) = \tilde{f}(\lambda,\theta) \approx \sum_{j=1}^K d_jc_j(\theta)r_j(\lambda), 
\]
where $\lambda = \tan^{-1}(y/x)$ and $\theta = \cos^{-1}(z/(x^2+y^2)^{1/2})$. Thus, 
$f(x,y,z)$ can be calculated by evaluating $2K$ 1D Fourier expansions~\eqref{eq:FourierExpansions} using 
Horner's algorithm, which requires a total of $\mathcal{O}(K(n + m))$ operations~\cite{TrigFun}.   

The spherefun software allows users to evaluate using either Cartesian or spherical coordinates.  
In the former case,  points that do not satisfy $x^2 + y^2 + z^2 =1$, are projected to the unit sphere in the radial direction. 

\subsubsection*{Computation of Fourier coefficients}\label{sec:FourierCoefficients}
The DFS method and our low rank approximant for $\tilde{f}$ means 
that the FFT is applicable when computing with $\tilde{f}$. Here, we assume that the 
Fourier coefficients for $c_j$ and $r_j$ in~\eqref{eq:FourierExpansions} are unknown. 
In full tensor-product form the bi-periodic BMC-I function can be approximated using a 2D Fourier expansion. That is,
\begin{align}
 \tilde{f}(\lambda,\theta) \approx \sum_{j=-m/2}^{m/2-1}\sum_{k=-n/2}^{n/2-1} X_{jk} e^{ij\theta} e^{ik\lambda}. \label{eq:2DFourierExpansion}
\end{align}
The $m\times n$ matrix $X$ of Fourier coefficients can be directly computed by sampling $\tilde{f}$ on a 2D uniform tensor-product 
grid and using the 2D FFT, costing $\mathcal{O}(mn\log (mn))$ operations.  The low rank structure of 
$\tilde{f}$ allows us to compute a low rank approximation of $X$ in $\mathcal{O}(K(m\log m + n\log n))$ operations from uniform samples of $\tilde{f}$ along 
the adaptively selected skeleton from Section~\ref{sec:lowRankApproximation}.  
%$\tilde{f}$ allows us to compute a low rank approximation of $X$ in $\mathcal{O}(K(m\log m + n\log n))$ operations from uniform samples of $\tilde{f}$ along 
%
%After sampling $\tilde{f}$ along , 
%the coefficients for $c_{j}$ and $r_{j}$ in~\eqref{eq:FourierExpansions}
%are computed in $\mathcal{O}(K^3+K^2(m+n) + K(m\log m + n\log n))$ operations by GE on the 
%skeleton~\cite{townsend2013gaussian} to obtain the values of $c_j$ and $r_j$ at uniform grids and then using the 
%FFT.  The matrix $X$ will be calculated in low rank form 
The matrix $X$ is given in low rank form as $X = A D B^{T}$,
where $A$ is an $m\times K$ matrix and $B$ is an $n\times K$ matrix so that the $j$th column of $A$ and $B$ 
is the vector of Fourier coefficients for $c_j$ and $r_j$, respectively,
and $D$ is a $K\times K$ diagonal matrix containing $d_j$. 
From the low rank format of $X$ one can calculate the entries of $X$ by 
matrix multiplication in $\mathcal{O}(Kmn)$ operations. 

The inverse operation is to sample $\tilde{f}$ on an $m\times n$ uniform grid in $[-\pi,\pi]\times [-\pi,\pi]$ given 
its Fourier coefficient matrix.  If $X$ is given in low rank form, then this can be achieved 
in $\mathcal{O}(K(m\log m + n\log n))$ operations via the inverse FFT. 

These efficient algorithms are regularly employed in spherefun, especially in the Poisson solver (see Section \ref{sec:PoissonSphere}).  The Fourier coefficients of a spherefun object are computed
by the \texttt{coeffs2} command and the values of the function at a uniform $\lambda$-$\theta$ grid are computed by the command \texttt{sample}.

\subsubsection*{Integration}
The definite integral of a function $f(x,y,z)$ over the sphere can be efficiently computed 
in spherefun as follows:
\[
\int_{S}f(x,y,z)dx\,dy\,dz = \int_0^\pi \int_{-\pi}^{\pi} \tilde{f}(\lambda,\theta)\sin\theta \,d\lambda \,d\theta \approx \sum_{j=1}^K d_j \int_0^\pi c_j(\theta)\sin\theta \,d\theta \int_{-\pi}^\pi r_j(\lambda)\,d\lambda
\]
%\[
%\begin{aligned} 
%\int_{S}f(x,y,z)dx\,dy\,dz &= \int_0^\pi \int_{-\pi}^{\pi} \tilde{f}(\lambda,\theta)\sin\theta \,d\lambda \,d\theta \\
%&\approx \sum_{j=1}^K d_j \int_0^\pi c_j(\theta)\sin\theta \,d\theta \int_{-\pi}^\pi r_j(\lambda)\,d\lambda.
%\end{aligned} 
%\]
Hence, the approximation of the integral of $f$ over the sphere reduces 
to $2K$ one-dimensional integrals involving $2\pi$-periodic functions.  

Due to the orthogonality of the Fourier basis, the integrals of $r_j(\lambda)$ are given as
\[
 \int_{-\pi}^\pi r_j(\lambda)\,d\lambda = 2b_0^j, \qquad 1\leq j\leq K,
\]
where $b_0^j$ is the zeroth Fourier coefficient of $r_j$ in~\eqref{eq:FourierExpansions}. 
The integrals of $c_j(\theta)$ are over half the period so the expressions are a bit more complicated.  
Using symmetry and orthogonality, they work out to be 
\begin{equation}
\int_0^\pi c_j(\theta)\sin\theta \, d\theta = \sum_{k=-m/2}^{m/2-1} w_ka_k^j, \qquad 1\leq j\leq K,
\label{eq:integrationCoeffs}
\end{equation} 
where $w_{\pm 1} = 0$ and $w_k = (1+e^{i\pi k})/(1-k^2)$ for $-m/2\leq k\leq m/2-1$  and $k\neq \pm 1$. 
Here, $a_k^j$ are the Fourier coefficients for $c_j$ in~\eqref{eq:FourierExpansions}.

Therefore, we can compute the surface integral of $f(x,y,z)$ over the sphere in 
$\mathcal{O}(Km)$ operations. This algorithm is used in the {\tt sum2} command of 
spherefun.   For example, the function $f(x,y,z) = 1+x+y^2+x^2y+x^4+y^5+(xyz)^2$ has a 
surface integral of $216\pi/35$ and can be calculated in spherefun as follows:
\begin{alltt}
f=spherefun(@(x,y,z) 1+x+y.^2+x.^2.*y+x.^4+y.^5+(x.*y.*z).^2);
sum2(f) 
ans =
    19.388114662154155
\end{alltt} 
The error is computed as \texttt{abs(sum2(f)-216*pi/35)} and is given by $3.553\times10^{-15}$.

%This surprising efficiency extends to a broad 
%range of tasks, which includes all the basic operations. 

% \begin{definition}
%  A tensor product operator $\mathcal{L}$ is a linear operator on functions $\lambda$ and 
%  $\theta$ with the property that if $f(\lambda,\theta) = d_1c_1(\theta)r_1(\lambda)$, then
%  $\mathcal{L}(f) = d_1\mathcal{L}_\theta(c_1)\mathcal{L}_\lambda(r_1)$, for some operators 
%  $\mathcal{L}_\theta$ and $\mathcal{L}_\lambda$. Moreover, if $f$ has rank $k$, then 
%  \[
%   \mathcal{L}\left(\sum_{j=1}^k d_j c_j(\theta)r_j(\lambda)\right) = \sum_{j=1}^k d_j \left(\mathcal{L}_\theta c_j(\theta)\right)\left(\mathcal{L}_\lambda r_j(\lambda)\right).
%  \]
% \end{definition}
% 
% Partial differentiation, $\partial f/\partial x$, is not strictly a tensor product operation 
% because $\partial f/\partial x$ can have a higher rank than $f$. However, it is the same 
% of two tensor product operations. 

\subsubsection*{Differentiation} \label{sec:diff}
Differentiation of a function on the sphere with respect to spherical coordinates $(\lambda,\theta)$ can lead to 
singularities at the poles, even for smooth functions~\cite{Swarztrauber81}.  For example, consider the 
simple function $f(\lambda,\theta) = \cos\theta$.  The $\theta$-derivative of this function is $\sin\theta$, which is continuous 
on the sphere but not smooth at the poles.  Fortunately, one is typically interested in the derivatives that arise 
in applications such as in vector calculus operations involving the gradient, divergence, curl, or Laplacian.  
All of these operators can be expressed in terms of the components of the surface gradient with 
respect to the Cartesian coordinate system~\cite{FlyerWright}.   

Let $\mathbf{e}^{x}$, $\mathbf{e}^{y}$, and $\mathbf{e}^{z}$, denote the unit vectors in the 
$x$, $y$, and $z$ directions, respectively, and $\grads$ denote the surface gradient on the sphere in Cartesian coordinates.
From the chain rule, we can derive the Cartesian components of  $\grads$  as
\begin{align}
\mathbf{e}^{x}\cdot\grads := \frac{\tpartial }{\partial x} &= -\frac{\sin \lambda}{\sin \theta}\frac{\partial}{\partial \lambda} + \cos\lambda\cos\theta\frac{\partial}{\partial \theta}, \label{eq:diff_x} \\
\mathbf{e}^{y}\cdot\grads := \frac{\tpartial }{\partial y} &= \phantom{-}\frac{\cos \lambda}{\sin \theta}\frac{\partial}{\partial \lambda} + \sin\lambda\cos\theta\frac{\partial}{\partial \theta}, \label{eq:diff_y} \\
\mathbf{e}^{z}\cdot\grads := \frac{\tpartial }{\partial z} &= \phantom{-\frac{\sin \lambda}{\sin \theta}\frac{\partial}{\partial \lambda} + \cos\lambda}\sin\theta\frac{\partial}{\partial \theta}. \label{eq:diff_z}
\end{align}
Here, the superscript `${\rm t}$' indicates that these operators are tangential gradient operations. 
The result of applying any of these operators to a smooth function on the sphere is a smooth function on 
the sphere~\cite{Swarztrauber81}.  For example, applying $\partial^t / \partial x$ to $\cos\theta$ gives  
$-\cos\lambda\,\sin\theta\,\cos\theta$, which in Cartesian coordinates is $-xz$ restricted to the sphere.  

%There is an abuse of notation here as we are referring to these operators as partial derivatives with respect to $x$, $y$, and $z$.  Strictly speaking, they 
%represent the components of the tangential gradient of a function on the surface of the sphere in the $x$, $y$, and $z$ 
%directions, respectively.  
%We prefer the notation $\partial / \partial x$, $\partial / \partial y$, and $\partial / \partial z$ because the standard 
%vector calculus operations can be conveniently expressed in terms of these definitions (see Section~\ref{sec:vectorCalculus}).

As with integration, our low rank approximation for $\tilde{f}$ can be exploited to compute 
\eqref{eq:diff_x}--\eqref{eq:diff_z} efficiently. For example, using~\eqref{eq:FourierExpansions} we have
\begin{equation}
\begin{aligned}
 \frac{\tpartial \tilde{f}}{\partial x} &= -\frac{\sin\lambda}{\sin\theta}\frac{\partial \tilde{f}}{\partial \lambda} + \cos\lambda\cos\theta\frac{\partial \tilde{f}}{\partial \theta}\\
 &\approx -\sum_{j=1}^K \left(\frac{c_j(\theta)}{\sin\theta}\right)\left(\sin\lambda \, \frac{\partial r_j(\lambda)}{\partial \lambda}\right) + \sum_{j=1}^K \left(\cos\theta \, \frac{\partial c_j(\theta)}{\partial \theta}\right)\left(\cos\lambda\, r_j(\lambda)\right).\\
\end{aligned}
\label{eq:dfdx}
\end{equation}
It follows that $\tpartial \tilde{f}/\partial x$ can be calculated by essentially 1D 
algorithms involving differentiating Fourier expansions as well as 
multiplying and dividing them by cosine and sine.  In the above expression, for example, we have 
\[
 (\sin\lambda)\frac{\partial r_j(\lambda)}{\partial \lambda} = \!\!\!\sum_{k=-n/2-1}^{n/2}\!\!\!\!\! \tfrac{-(k+1)b_{k+1}^j+(k-1)b_{k-1}^j}{2}  e^{i k \lambda}, \quad  (\cos\lambda) r_j(\lambda) = \!\!\!\sum_{k=-n/2-1}^{n/2}\!\!\!\!\! \tfrac{b_{k+1}^j+b_{k-1}^j}{2} e^{i k \lambda},
\]
%\[
% (\sin\lambda)\frac{\partial r_j(\lambda)}{\partial \lambda} = \sum_{k=-n/2-1}^{n/2} \frac{-(k+1)b_{k+1}^j+(k-1)b_{k-1}^j}{2}  e^{i k \lambda}
%\]
%and 
%\[
% (\cos\lambda) r_j(\lambda) = \sum_{k=-n/2-1}^{n/2} \frac{b_{k+1}^j+b_{k-1}^j}{2} e^{i k \lambda},
%\]
where $b_{-n/2-2}^{j} = b_{-n/2-1}^{j} = 0$ and $b_{n/2}^{j} = b_{n/2+1}^{j}= 0$.  Note that the number of coefficients in the Fourier representations of these derivatives has 
increased by two modes 
to account for multiplication by $\sin\lambda$ and $\cos\lambda$.  Similarly, we also have
\[
 (\cos\theta)\frac{\partial c_j(\theta)}{\partial \theta} = \sum_{k=-m/2-1}^{m/2+1} \frac{(k+1)ia_{k+1}^j+(k-1)ia_{k-1}^j}{2}  e^{i k \theta},
\]
where $a_{-m/2-2}^{j} = a_{-m/2-1}^{j} = 0$ and $a_{m/2}^{j} = a_{m/2+1}^{j} = 0$.  Lastly, 
for~\eqref{eq:dfdx} we must compute $c_j(\theta)/\sin\theta$. This can be done as follows:
\begin{equation}
  \frac{c_j(\theta)}{\sin\theta} = \sum_{k=-m/2}^{m/2-1} (M_{\sin}^{-1}\underline{a}^j)_k  e^{i k \lambda}, \qquad  
  M_{\sin} = \frac{i}{2}\begin{bmatrix}0 & 1\cr -1 & 0 & \ddots \cr & \ddots & \ddots & 1 \cr && -1 & 0\end{bmatrix},
\label{eq:Msin}
\end{equation}
where $\underline{a}^{j} = (a_{-m/2}^{j},\ldots,a_{m/2-1}^{j})^T$. Here, $M_{\sin}^{-1}$ exists because $m$ is 
an even integer.\footnote{The eigenvalues of $M_{\sin}$ are $\cos(\pi \ell/(m+1))$, $1\leq \ell\leq m$, which are all nonzero when $m$ is even.}

Therefore, though it appears that~\eqref{eq:dfdx} is introducing 
an artificial pole singularity by the division of $\sin\theta$, this 
is not case.  Our treatment of the artificial pole singularity by operating on the 
coefficients directly appears to be new.  The standard technique when 
using spherical coordinates on a latitude-longitude grid is to shift 
the grid in the latitude direction so that the poles are not 
sampled~\cite{Fornberg_95_01,Cheong2000261,Yee1981}. In~\eqref{eq:Msin} 
there is no need to explicitly avoid the pole, it is easy to implement, 
and is possibly more accurate numerically than shifting the grid. This 
algorithm costs $\mathcal{O}(K(m+n))$ operations.
  
We use similar ideas to compute \eqref{eq:diff_y} and \eqref{eq:diff_z}, which require
a similar number of operations. They are implemented in the \texttt{diff} command of spherefun. 

\subsection{Vector-valued functions on the sphere and vector calculus}\label{sec:vectorCalculus}
Expressing vector-valued functions that are tangent to the sphere in spherical coordinates is very inconvenient
since the unit vectors in this coordinate system are singular at the poles~\cite{Swarztrauber81}.   
It is therefore common practice to express vector-valued functions in Cartesian coordinates, not latitude--longitude coordinates.  
In Cartesian coordinates the components of the vector field are smooth and can be approximated using the 
low rank techniques developed in Section~\ref{sec:lowRankApproximation}.  

All the standard vector-calculus operations can be expressed in terms of the tangential derivative operators 
in~\eqref{eq:diff_x}--\eqref{eq:diff_z}.   For example the surface gradient, $\grads$, of a scalar-valued
function $f$ on the sphere is given by the vector
\begin{align*}
\grads f = \begin{bmatrix} \diffx{f}, & \diffy{f}, & \diffz{f} \end{bmatrix}^{T},
\end{align*}
where the partial derivatives are defined in \eqref{eq:diff_x}--\eqref{eq:diff_z}.  The surface divergence and curl of a vector 
field $\mathbf{f} = \begin{bmatrix} f_1, & f_2, & f_3 \end{bmatrix}^T$ that is tangent to the sphere 
can also be computed using \eqref{eq:diff_x}--\eqref{eq:diff_z} as
\[
\grads \cdot \mathbf{f} = \diffx{f_1} + \diffy{f_2} + \diffz{f_3},\;  \grads \times \mathbf{f} = 
\begin{bmatrix} 
\diffy{f_3} - \diffz{f_2}, & 
\diffz{f_1} - \diffx{f_3}, & 
\diffx{f_2} - \diffy{f_1}
\end{bmatrix}^{T}.
\]
The result of the surface curl $\grads \times \mathbf{f}$ is a vector that is tangent to the sphere.

In 2D one can define the ``curl of a scalar-valued function'' as the cross product of the unit normal vector to the surface 
and the gradient of the function. For a scalar-valued function on the sphere, the curl in Cartesian coordinates is given by
\begin{align}
\mathbf{n}\times\grads f = 
\begin{bmatrix}
y \diffz{f} - z \diffy{f}, & 
z \diffx{f} - x \diffz{f}, &
x \diffy{f} - y \diffx{f} 
\end{bmatrix}^{T},
\label{eq:CurlScalarF}
\end{align}
where $x$, $y$, and $z$ are points 
on the unit sphere given by~\eqref{eq:sphericalCoordinates}.  This follows from the fact that  
the unit normal vector at $(x,y,z)$ on the unit sphere is just $\mathbf{n} = (x,y,z)^T$.

The final vector calculus operation we consider is the vorticity of a vector field, which for a two-dimensional surface is a 
scalar-valued function defined as $\zeta = ( \grads \times \mathbf{f})\cdot\mathbf{n}$, and can 
be computed based on the operators described above.
%the dot product of the unit normal vector to the surface, $\mathbf{n}$, with the surface curl of the vector field.  
%This can be computed with respect to Cartesian coordinates using the descriptions 
%for the other operators written above.

Vector-valued functions are represented by \texttt{spherefunv} objects, which contain three 
spherefun objects, one for each component of the vector-valued function. Low rank techniques described in Section~\ref{sec:lowRankApproximation} 
are employed on each component separately. The operations listed above can be computed using the \texttt{grad},  
\texttt{div}, \texttt{curl}, and \texttt{vort} commands; see Figure~\ref{fig:VectorCalculus} for an 
example.

\begin{figure} 
\begin{center}
\includegraphics[width=0.33\textwidth]{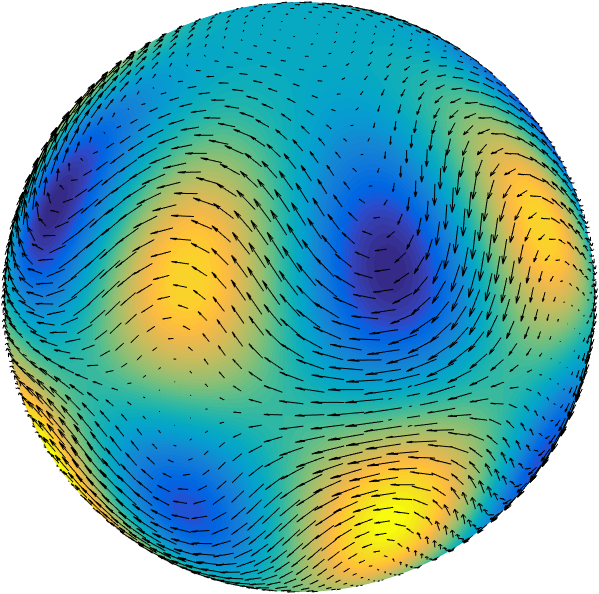}
\end{center}
 \caption{Arrows indicate the tangent vector field generated from $\mathbf{u} = \grads \times \psi$, where 
 $\psi(\lambda,\theta)=\cos\theta + (\sin\theta)^4\cos\theta\cos(4\lambda)$, which is the stream function 
 for the Rossby--Haurwitz benchmark problem for the shallow water wave equations~\cite{williamson1992}.  
 After constructing $\psi$ in spherefun, the tangent vector field was computed using \texttt{u = curl(psi)}, and plotted using 
 \texttt{quiver(u)}.  The superimposed false color plot represents the vorticity of $\mathbf{u}$ and is computed using \texttt{vort(u)}.}
 \label{fig:VectorCalculus} 
\end{figure}

\subsection*{Miscellaneous operations}
The spherefun class is written as part of Chebfun, which means that spherefun objects have immediate access to all the operations 
available in Chebfun.  For operations that do not require a strict adherence to the symmetry of the sphere, we 
can use Chebfun2 with spherical coordinates~\cite{Chebfun2}.  Important examples include two-dimensional optimization such 
as {\tt min2}, {\tt max2}, and {\tt roots} as well as continuous linear algebra operators such as {\tt lu} and {\tt flipud}.  
The operations that use the Chebfun2 technology are performed seamlessly without user intervention. 

\section{A fast and optimal spectral method for Poisson's equation}\label{sec:PoissonSphere}
The DFS method leads to an efficient spectral method for solving 
Poisson's equation on the sphere. 
The Poisson solver that we describe is simple, based on the Fourier spectral method, 
and has optimal complexity. Related approaches can be found in~\cite{Yee1981,shen1999efficient,Cheong2000327}.

Given a function $f$ on the sphere satisfying $\int_0^\pi\int_{-\pi}^\pi  f(\lambda,\theta) \sin\theta d\lambda d\theta = 0$, 
Poisson's equation in spherical coordinates is given by
\begin{equation}
 (\sin\theta)^2 \frac{\partial^2 u}{\partial \theta^2}+ \cos\theta\sin\theta \frac{\partial u}{\partial \theta} + \frac{\partial^2 u}{\partial \lambda^2} = (\sin\theta)^2f,\qquad (\lambda,\theta)\in[-\pi,\pi]\times[0,\pi].
\label{eq:SphericalLaplacian}
\end{equation}
Due to the integral condition on $f$,~\eqref{eq:SphericalLaplacian} has infinitely 
many solutions, all differing by a constant. 
To fix this constant it is standard to solve~\eqref{eq:SphericalLaplacian} together with the 
constraint  $\int_0^\pi\int_{-\pi}^\pi  u(\lambda,\theta) \sin\theta d\lambda d\theta = 0.$ 
Here, we assume that this constraint is imposed.
 
One can solve~\eqref{eq:SphericalLaplacian} directly on the domain $[-\pi,\pi]\times [0,\pi]$,
but then the solution $u$ is not $2\pi$-periodic in the $\theta$-variable due to the coordinate transform. 
To recover the periodicity in $\theta$, we use the DFS method (see Section~\ref{sec:doubleFouriersphere}) 
and seek an approximation for the solution, denoted by $\tilde{u}$, to the ``doubled-up'' version of~\eqref{eq:SphericalLaplacian} 
given by
\begin{equation} 
  (\sin\theta)^2 \tilde{u}_{\theta\theta} + \cos\theta\sin\theta \tilde{u}_\theta + \tilde{u}_{\lambda\lambda} = (\sin\theta)^2\tilde{f},\qquad (\lambda,\theta)\in[-\pi,\pi]^2,
\label{eq:BMCPoisson}
\end{equation} 
where $\tilde{f}$ is a BMC-I function that is bi-periodic, see~\eqref{eq:BMCsphere}.
One can verify that the solution $\tilde{u}$ to~\eqref{eq:BMCPoisson} must 
also be a BMC-I function, i.e., a continuous function on the sphere. 
On the domain $[-\pi,\pi]\times[0,\pi]$ the solution 
$\tilde{u}$ in~\eqref{eq:BMCPoisson} must 
coincide with the solution $u$ to~\eqref{eq:SphericalLaplacian}. Thus, 
we impose the same integral constraint on $\tilde{u}$: 
\begin{equation}
 \int_0^\pi\int_{-\pi}^\pi \tilde{u}(\lambda,\theta) \sin\theta d\lambda d\theta = 0. 
\label{eq:integralConstraint2}
\end{equation} 

Since all the functions in~\eqref{eq:BMCPoisson} are bi-periodic,  
we discretize the equation by the Fourier spectral method~\cite{boyd2001chebyshev}, 
and $\tilde{u}$ is represented by a 2D Fourier expansion, i.e., 
\begin{equation}
\tilde{u}(\lambda,\theta) \approx \sum_{j=-m/2}^{m/2-1}\sum_{k=-n/2}^{n/2-1} X_{jk} e^{i j \theta}e^{i k \lambda}, \qquad (\lambda,\theta)\in[-\pi,\pi]^2,
\label{eq:tildeuFourierExpansion}
\end{equation}
where $m$ and $n$ are even integers, and seek to compute the coefficient matrix $X\in\mathbb{C}^{m\times n}$.  
Continuous operators, such as differentiation and multiplication, are 
now discretized to matrices by carefully inspecting how each operation 
modifies the coefficient matrix $X$ in~\eqref{eq:tildeuFourierExpansion} and representing 
the action by a matrix. For example,
\[
 \frac{\partial \tilde{u}}{\partial \theta} = \!\!\!\sum_{j=-m/2}^{m/2-1}\sum_{k=-n/2}^{n/2-1}\!\!\!\! jiX_{jk} e^{i j \theta}e^{i k \lambda}, \quad (\cos\theta)\tilde{u} = \!\!\!\sum_{j=-m/2}^{m/2-1}\sum_{k=-n/2}^{n/2-1} \!\!\!\!\frac{X_{j+1,k}+X_{j-1,k}}{2} e^{i j \theta}e^{i k \lambda},
\]
where $X_{m/2+1,k} = 0$ and $X_{-m/2,k} = 0$ for $-n/2-1\leq k\leq n/2$. Thus, 
we can represent $\partial/\partial\theta$ and multiplication 
by $\cos\theta$ by $D_mX$ and $M_{\cos}X$, respectively, where
%\[
% D_m = \begin{pmatrix}-\tfrac{mi}{2} \\[-2pt] & \ddots \\[-2pt] &&-i \\[-3pt]&&& 0\\[-2pt]&&&& i \\[-2pt]&&&&& \ddots \\[-2pt]&&&&&& \frac{(m-2)i}{2}\end{pmatrix}, \quad M_{\cos} = \begin{pmatrix}0 & \tfrac{1}{2}\cr \tfrac{1}{2} & 0 & \tfrac{1}{2} \cr & \tfrac{1}{2} & \ddots & \ddots & \cr & & \ddots & \ddots & \tfrac{1}{2}\cr  &&& \tfrac{1}{2} & 0 & \tfrac{1}{2} \cr &&&& \tfrac{1}{2} & 0\end{pmatrix}.
%\]
\[
 D_m = \text{diag}\left(\left[-\tfrac{mi}{2}, \cdots,\;  -i,\;  0,\;  i, \cdots,\; \tfrac{(m-2)i}{2}\right]\right), \quad 
 M_{\cos} = \frac{1}{2}\begin{bmatrix}0 & 1\cr 1 & 0 & \ddots \cr & \ddots & \ddots & 1 \cr && 1 & 0\end{bmatrix}.
\]
Similar reasoning shows that $\partial/\partial\lambda$ and multiplication 
by $\sin\theta$ can be discretized as $XD_n$ and $M_{\sin}X$, where $M_{\sin}$ 
is given in~\eqref{eq:Msin}.
Therefore, we can discretize~\eqref{eq:BMCPoisson} by the following 
Sylvester matrix equation:  
\begin{equation} 
 \left(M_{\sin}^2D_m^2 + M_{\cos}M_{\sin}D_m\right)X + XD_n^2 = F,
 \label{eq:SylvesterSphericalPoisson} 
\end{equation} 
where $F\in\mathbb{C}^{m\times n}$ is the matrix of 2D Fourier coefficients 
for $(\sin\theta)^2\tilde{f}$ in an expansion like~\eqref{eq:tildeuFourierExpansion}.

We note that~\eqref{eq:SylvesterSphericalPoisson} can be solved very 
fast because $D_n$ is a diagonal matrix and hence, each column of $X$ can 
be found independently of the others. Writing 
$X = \left[X_{-n/2}\,|\,\cdots\,|\,X_{n/2-1}\right]$ and 
$F = \left[F_{-n/2}\,|\,\cdots\,|\,F_{n/2-1}\right]$, we can equivalently 
write~\eqref{eq:SylvesterSphericalPoisson} as $n$ decoupled linear systems,
\begin{equation} 
  \left(M_{\sin}^2D_m^2 + M_{\cos}M_{\sin}D_m - (D_n^2)_{kk}I_m\right)X_k = F_k, \qquad -n/2\leq k\leq n/2-1,
\label{eq:linearSystems}
\end{equation} 
where $I_m$ denotes the $m\times m$ identity matrix. 

For $k\neq 0$, the linear systems in~\eqref{eq:linearSystems} have a 
pentadiagonal structure and are invertible. They can be solved 
by backslash, i.e., `\textbackslash', in MATLAB 
that employs a sparse LU solver. For each $k\neq0$ this requires 
just $\mathcal{O}(m)$ operations, for a total of $\mathcal{O}(mn)$ 
operations for the linear systems in~\eqref{eq:linearSystems} with 
$-n/2\leq k\leq n/2-1$ and $k\neq 0$. 

%When $k=0$ the linear system in~\eqref{eq:linearSystems} is not invertible. This makes sense because 
%without imposing the integral constraints in~\eqref{eq:integralConstraint2}, the equation in~\eqref{eq:BMCPoisson} 
%has infinitely many solutions that differ by constants. We must use 
%the integral constraint in~\eqref{eq:integralConstraint2} when $k=0$. 
%To discretize the constraint we note that 
When $k=0$ the linear system in~\eqref{eq:linearSystems} is not invertible because we have not accounted for the
integral constraint in~\eqref{eq:integralConstraint2}, which fixes the free constant the solutions can differ by.  We account for
this constraint on the $k=0$ mode by noting that
\[
 \int_0^\pi\int_{-\pi}^\pi \tilde{u}(\lambda,\theta)\sin\theta d\lambda d\theta \approx  2\pi\sum_{j=-m/2}^{m/2-1} X_{j0} \frac{1+e^{i\pi j}}{1-j^2},
\]
%\[
%\begin{aligned} 
% \int_0^\pi\int_{-\pi}^\pi \tilde{u}(\lambda,\theta)\sin\theta d\lambda d\theta &\approx \sum_{j=-n/2}^{\tfrac{m}{2}-1}\sum_{k=-n/2}^{n/2-1} X_{jk} \int_0^\pi \sin\theta e^{i j \theta}d\theta \int_{-\pi}^\pi e^{i k \lambda}d\lambda= 2\pi\sum_{j=-\tfrac{m}{2}}^{\tfrac{m}{2}-1} X_{j0} \frac{1+e^{i\pi j}}{1-j^2}\\
% & = 2\pi\sum_{j=-\tfrac{m}{2}}^{\tfrac{m}{2}-1} X_{j0} \int_0^\pi \sin\theta e^{i j \theta}d\theta = 2\pi\sum_{j=-\tfrac{m}{2}}^{\tfrac{m}{2}-1} X_{j0} \frac{1+e^{i\pi j}}{1-j^2}.\\
%\end{aligned}
%\]
which can be written as 
$2\pi w^TX_0 = 0$, where the vector $w$ is given in~\eqref{eq:integrationCoeffs}. 
We impose $2\pi w^TX_0 = 0$ on $X_0$ by replacing 
the zeroth row of the linear system $(M_{\sin}^2D_m^2 + M_{\cos}M_{\sin}D_m)X_0 = F_0$ 
with $2\pi w^TX_0 = 0$. We have selected the zeroth row 
because it is zero in the linear system.  
Thus, we solve the following linear system:
\begin{equation} 
 \begin{bmatrix} 
  \quad   w^T   \quad    \cr
   P\left(M_{\sin}^2D_m^2 + M_{\cos}M_{\sin}D_m\right)
 \end{bmatrix}
 X_0
 = 
  \begin{bmatrix} 
  0 \\[3pt]
  PF_0
  \end{bmatrix},
\label{eq:pentdiagonalPlusRankOne}
\end{equation} 
where $P\in\mathbb{R}^{(m-1)\times m}$ is a projection matrix that removes the zeroth row, i.e., 
\[
 P\left(v_{-m/2},\ldots, v_{-1}, v_0, v_{1},\ldots, v_{m/2-1}\right)^T = \left(v_{-m/2},\ldots, v_{-1}, v_{1},\ldots, v_{m/2-1}\right)^T.
\]

The linear system in~\eqref{eq:pentdiagonalPlusRankOne} is banded with one dense 
row, which can be solved in $\mathcal{O}(m)$ operations using the adaptive QR 
algorithm~\cite{Olver_13_01}. For simplicity, since solving~\eqref{eq:pentdiagonalPlusRankOne} is not 
the dominating computational cost we use the backslash command  
in MATLAB on sparse matrices, which requires $\mathcal{O}(m)$ 
operations. 

The resulting Poisson solver may be regarded as having an optimal complexity of
$\mathcal{O}(mn)$ because we solve for $mn$ Fourier coefficients in~\eqref{eq:tildeuFourierExpansion}. 
In practice, one may need to calculate the matrix of 2D Fourier 
coefficients for $\tilde{f}$ that costs $\mathcal{O}(mn\log (mn))$ operations 
if the low rank approximation of $\tilde{f}$ is not exploited. If the low rank 
structure of $\tilde{f}$ is exploited, then since the whole $m\times n$ matrix 
coefficients $F$ is required in the Poisson solver the cost is $\mathcal{O}(mn)$ operations (see Section~\ref{sec:FourierCoefficients}).

In Figure~\ref{fig:FastSphericalPoissonSolver} (left) the solution to $\nabla^2u = \sin(50xyz)$ 
on the sphere is shown. Here, we used our algorithm with $m=n=150$. 
Before we can apply the algorithm, the matrix of 2D Fourier coefficients for $\sin(50xyz)$ is computed. Since 
the BMC-I function associated with $\sin(50xyz)$ has a
numerical rank of $12$ this costs $\mathcal{O}(mn)$ operations. In Figure~\ref{fig:FastSphericalPoissonSolver} (right)
we verify the complexity of our Poisson solver by showing timings for $m=n$. We have denoted the number of 
degrees of freedom of the final solution as $mn/2$ since this is the number that is employed on the solution 
$u$. Without explicit parallelization, even though the solver is embarrassingly parallel, we can solve for 
$100$ million degrees of freedom in the solution in one minute on a standard laptop.\footnote{Timings were done on a MacBook Pro using MATLAB 2015b without explicit parallelization.}

\begin{figure} 
 \begin{minipage}{.49\textwidth} 
 \begin{overpic}[width=.9\textwidth,trim=120 95 120 80,clip]{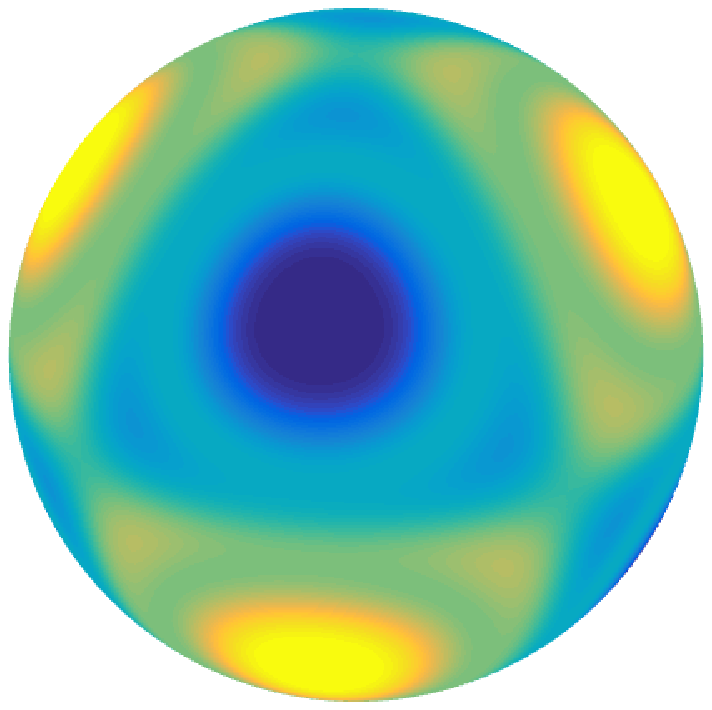}
  
 \end{overpic}
 \end{minipage}
 \begin{minipage}{.49\textwidth} 
 \begin{overpic}[width=.9\textwidth]{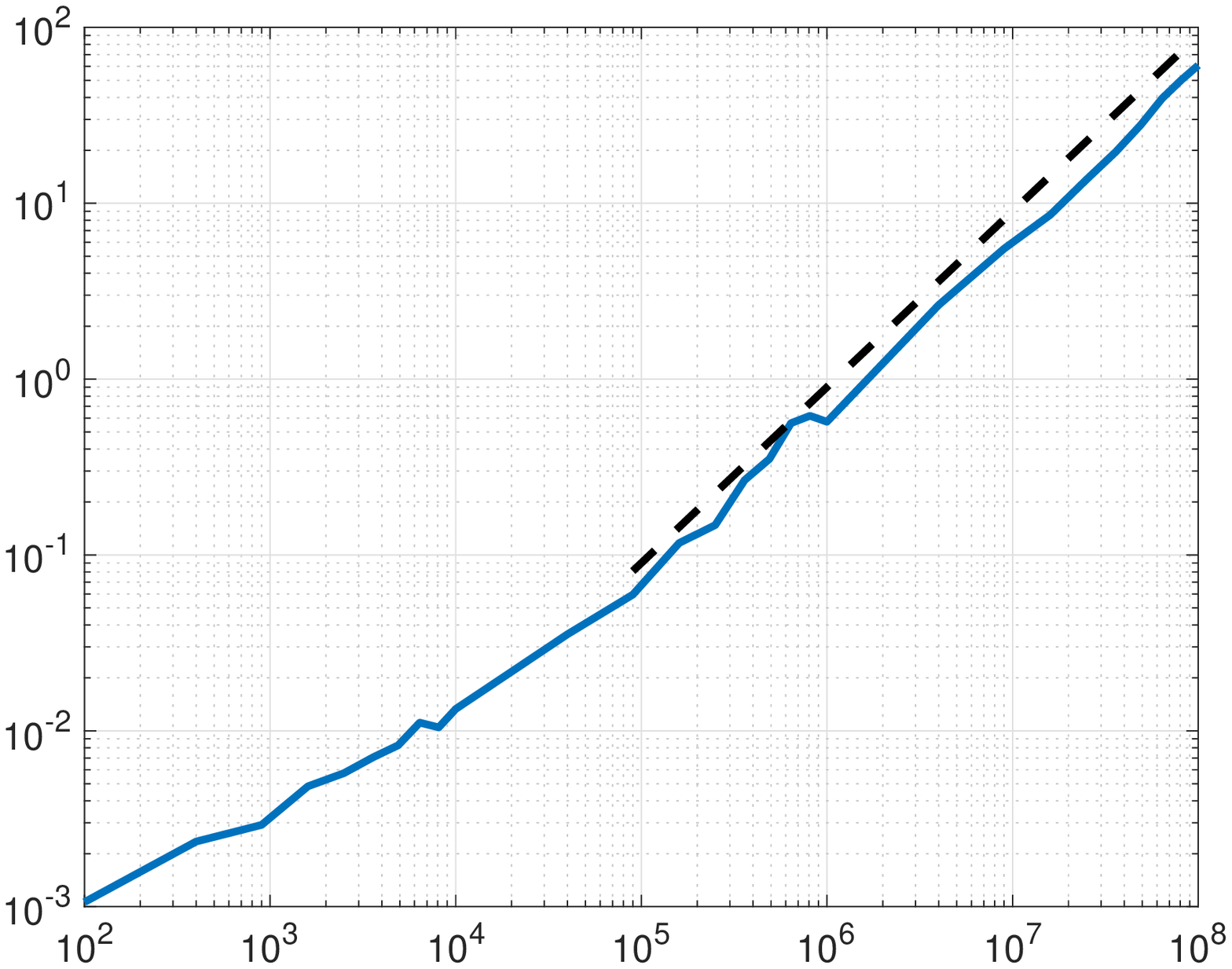}
  \put(-5,15) {\rotatebox{90}{{\small Computational time (sec)}}}
  \put(45,-3) {{\small $m/2 \times n$}}
  \put(60,45) {\rotatebox{48}{$\mathcal{O}(mn)$}}
 \end{overpic}
 \end{minipage}
 \caption{Left: Solution to $\nabla^2u=\sin(50xyz)$ with a zero 
 integral constraint computed by {\tt f = spherefun(@(x,y,z) sin(50*x.*y.*z)); u = Poisson(f,0,150,150);}, 
 which employs the algorithm above with $m=n=150$.
 Right: Execution time of the Poisson solver as a function of the number of unknowns, $nm/2$, when $m = n$.}
 \label{fig:FastSphericalPoissonSolver}
\end{figure}

\section*{Conclusions} 
The double sphere method is synthesized with low 
rank approximation techniques to develop a software system for 
computing with functions on the sphere to essentially 
machine precision. We show how 
symmetries in the resulting functions can be preserved by an iterative variant 
of Gaussian elimination to efficiently construct low rank approximants. A collection of fast algorithms
are developed for differentiation, integration, 
vector calculus, and solving Poisson's equation. Now an
investigator can compute with functions on the sphere without worrying about the underlying discretizations. 
The code is publicly available as part of Chebfun~\cite{Chebfun}.

\section*{Acknowledgments}
We would like to congratulate Nick Trefethen on his 60th birthday and celebrate his 
impressive contribution to the field of scientific computing.  We dedicate this paper to him. 
We thank Behnam Hashemi and Hadrien Montanelli from the University of Oxford 
for reviewing the spherefun code and Zack Lindbloom-Brown and Jeremy Upsal from the University of Washington for 
their explorations with spherefun. We also thank the editor and referees for their valuable comments.

\bibliographystyle{siam}
\bibliography{spherefun}

\end{document}